\theoremstyle{plain}
\newtheorem{theorem}{Theorem}
\newtheorem{lemma}[theorem]{Lemma}
\newtheorem{corollary}[theorem]{Corollary}
\newtheorem{proposition}[theorem]{Proposition}
\theoremstyle{definition}
\newtheorem{definition}[theorem]{Definition}
\newtheorem{example}[theorem]{Example}
\newtheorem{conjecture}[theorem]{Conjecture}
\theoremstyle{remark}
\newtheorem{remark}[theorem]{Remark}
\title{\bf On periodicity of generalized pseudostandard words}
\author{Josef Florian\thanks{supported by the grant SGS14/205/OHK4/3T/14}\\
\small Department of Mathematics\\[-0.8ex]
\small Faculty of Nuclear Sciences and Physical Engineering\\[-0.8ex]
\small Czech Technical University in Prague\\
\small\tt florijos@fjfi.cvut.cz\\
\and
L\!'ubom\'ira Dvo\v r\'akov\'a (born Balkov\'a) \thanks{supported by Czech Science Foundation
grant GA\v CR 13-03538S}\\
\small Department of Mathematics\\[-0.8ex]
\small Faculty of Nuclear Sciences and Physical Engineering\\[-0.8ex]
\small Czech Technical University in Prague\\
\small\tt lubomira.balkova@fjfi.cvut.cz
}
\date{\dateline{Jan 1, 2012}{Jan 2, 2012}\\
\small Mathematics Subject Classifications: 68R15}
\begin{document}

\maketitle


\begin{abstract}
  Generalized pseudostandard words were introduced by de Luca and De Luca in 2006~\cite{LuDeLu}. In comparison to the palindromic and pseudopalindromic closure, only little is known about the generalized pseudopalindromic closure and the associated generalized pseudostandard words. In this paper we provide a necessary and sufficient condition for their periodicity over binary and ternary alphabet. More precisely, we describe how the directive bi-sequence of a~generalized pseudostandard word has to look like in order to correspond to a~periodic word. We state moreover a~conjecture concerning a~necessary and sufficient condition for periodicity over any alphabet.

  \bigskip\noindent \textbf{Keywords:} palindrome, palindromic closure, pseudopalindrome, pseudopalindromic closure, pseudostandard words
\end{abstract}

\section{Introduction}
This paper focuses on a recent topic of combinatorics on words: generalized pseudostandard words. Such words were defined by de Luca and De Luca in 2006~\cite{LuDeLu} and generalize standard episturmian words, resp.,  pseudostandard words -- instead of the palindromic closure, resp., one pseudopalindromic closure, an infinite sequence of involutory antimorphisms is considered.
While standard episturmian and pseudostandard words have been studied intensively and a~lot of their properties are known (see for instance~\cite{BuLuZa,DrJuPi,Lu,LuDeLu}), only little has been shown so far about the generalized pseudopalindromic closure that gives rise to generalized pseudostandard words. In the paper~\cite{LuDeLu} the authors have defined the generalized pseudostandard words and have proven there that the famous Thue--Morse word is an example of such words. Jajcayov\'a et al.~\cite{JaPeSt} have characterized generalized pseudostandard words in the class of generalized Thue--Morse words.
Jamet et al.~\cite{JaPaRiVu} deal with fixed points of the palindromic and pseudopalindromic closure and formulate an open problem concerning fixed points of the generalized pseudopalindromic closure. The most detailed study of generalized pseudostandard words has been so far provided by Blondin Mass\'e et al.~\cite{MaPa}:
 \begin{itemize}
 \item An algorithm for normalization over binary alphabet is described. This algorithm transforms the directive bi-sequence in such a~way that the obtained word remains unchanged and no pseudopalindromic prefix is missed during the construction.
 \item An effective algorithm -- the generalized Justin's formula -- for producing generalized pseudostandard words is presented.
 \item The standard Rote words are proven to be generalized pseudostandard words and the infinite sequence of antimorphisms that generates such words is studied.
\end{itemize}

In this paper we provide a sufficient and necessary condition for periodicity of generalized pseudostandard words over binary and ternary alphabet. More precisely, we describe how the directive bi-sequence of a~generalized pseudostandard word has to look like in order to correspond to a~periodic word.

The text is organized as follows. In Section~\ref{sec:CoW} we introduce basics from combinatorics on words.
In Section~\ref{sec:generalized_pseudopalindrome}, the generalized pseudopalindromic closure is defined, the normalization algorithm over binary alphabet from~\cite{MaPa} is recalled and some new partial results on normalization over ternary alphabet are provided. The main results are presented in the following two sections. A~sufficient and necessary condition for periodicity of generalized pseudostandard words is given in Section~\ref{sec:per} over binary alphabet and in Section~\ref{sec:perTernary} over ternary alphabet. Finally, a~conjecture concerning a~necessary and sufficient condition for periodicity over any alphabet is stated in the last section.

\section{Basics from combinatorics on words}\label{sec:CoW}
Any finite set of symbols is called an \emph{alphabet} $\mathcal A$, the elements are called \emph{letters}. A~\emph{(finite) word} $w$ over $\mathcal A$ is any finite sequence of letters. Its length $|w|$ is the number of letters it contains. The \emph{empty word} -- the neutral element for concatenation of words -- is denoted $\varepsilon$ and its length is set $|\varepsilon|=0$. The symbol ${\mathcal A}^*$ stands for the set of all finite words over $\mathcal A$.
An \emph{infinite word} $\mathbf u$ over $\mathcal A$ is any infinite sequence of letters.
A finite word $w$ is a~\emph{factor} of the infinite word $\mathbf u=u_0u_1u_2\ldots$ with $u_i \in \mathcal A$ if there exists an index $i\geq 0$ such that $w=u_iu_{i+1}\ldots u_{i+|w|-1}$. Let $u,v,w \in {\mathcal A}^*$, then for $w=uv$ we mean by $wv^{-1}$ the word $u$ and by $u^{-1}w$ the word $v$. The symbol ${\mathcal L}(\mathbf u)$ is used for the set of factors of $\mathbf u$ and is called the \emph{language} of $\mathbf u$, similarly ${\mathcal L}_n(\mathbf u)$ stands for the set of factors of $\mathbf u$ of length $n$.


\noindent Let $w \in \mathcal{L}({\mathbf u})$. A~\emph{left extension} of $w$ is any word $aw \in \mathcal{L}(\mathbf{u})$, where $a \in \mathcal A$. The factor $w$ is called \emph{left special} if $w$ has at least two left extensions.
The \emph{(factor) complexity} of $\mathbf{u}$ is the map $\mathcal{C}_{{\mathbf u}}: {\mathbb N} \rightarrow {\mathbb N}$ defined as $$\mathcal{C}_{{\mathbf u}}(n) =\#\mathcal{L}_n(\mathbf u).$$
The following results on complexity come from~\cite{MoHe}.
If an infinite word is eventually periodic, i.e., it is of the form $wv^{\omega}$, where $w,v$ are finite words ($w$ may be empty -- in such a case we speak about a~purely periodic word) and $\omega$ denotes an infinite repetition, then its factor complexity is bounded. An infinite word is not eventually periodic -- such a~word is called aperiodic -- if and only if its complexity satisfies: ${\mathcal C}(n)\geq n+1$ for all $n \in \mathbb N$. If an infinite word $\mathbf u$ contains for every length $n$ a~left special factor of length $n$, the complexity is evidently strictly growing, hence $\mathbf u$ is aperiodic.
An infinite word $\mathbf u$ is called \emph{recurrent} if each of its factors occurs infinitely many times in $\mathbf u$. It is said to be \emph{uniformly recurrent} if for every $n \in \mathbb N$ there exists a~length $r(n)$ such that every factor of length $r(n)$ of $\mathbf u$ contains all factors of length $n$ of $\mathbf u$.


An \emph{involutory antimorphism} is a~map $\vartheta: {\mathcal A}^* \rightarrow {\mathcal A}^*$ such that for every $v, w \in {\mathcal A}^*$ we have $\vartheta(vw) = \vartheta(w) \vartheta(v)$ and moreover $\vartheta^2$ equals identity. It is clear that in order to define an antimorphism, it suffices to provide letter images. There are only two involutory antimorphisms over the alphabet $\{0,1\}$: the \emph{reversal (mirror) map} $R$ satisfying $R(0)=0,\ R(1)=1$, and the \emph{exchange antimorphism} $E$ given by $E(0)=1,\ E(1)=0$.
We use the notation $\overline{0} = 1$ and $\overline{1} = 0$, $\overline{E} = R$ and $\overline{R} = E$.
There are only four involutory antimorphisms over the alphabet $\{0, 1, 2\}$: the reversal map $R$ satisfying $R(0)=0,\ R(1)=1,\ R(2)=2$, and three exchange antimorphisms $E_0,\ E_1,\ E_2$ given by $$\begin{array}{lll}
E_0(0)=0,&\ E_0(1)=2,&\ E_0(2)=1\\
E_1(0)=2,&\ E_1(1)=1,&\ E_1(2)=0\\
E_2(0)=1,&\ E_2(1)=0,&\ E_2(2)=2\; .
 \end{array}$$

Consider an involutory antimorphism $\vartheta$ over $\mathcal A$. A finite word $w$ is a~$\vartheta$-\emph{palindrome} if $w = \vartheta(w)$. The $\vartheta$-\emph{palindromic closure} $w^{\vartheta}$ of a~word $w$ is the shortest $\vartheta$-palindrome having $w$ as prefix. For instance, over binary alphabet $011^R=0110,\ 011^{E}=011001$.
We speak about the \emph{palindromic closure} if $\vartheta=R$ and the \emph{pseudopalindromic closure} if we do not need to specify which antimorphism $\vartheta$ is used.

\section{Generalized pseudopalindromic closure}\label{sec:generalized_pseudopalindrome}
Generalized pseudostandard words form a~generalization of infinite words generated by the palindromic or pseudopalindromic closure (see~\cite{BuLuZa,DrJuPi,Lu,LuDeLu} for more details on pseudostandard words); such a~construction was first described and studied in~\cite{LuDeLu}.
Let us start with their definition and known properties; we use the papers~\cite{JaPaRiVu,LuDeLu,MaPa}.

\subsection{Definition of generalized pseudostandard words}
%
%

\begin{definition}
Let $\mathcal A$ be an alphabet and $G$ be the set of all involutory antimorphisms on ${\mathcal A}^*$.
Let $\Delta = \delta_1 \delta_2 \ldots$ and $\Theta = \vartheta_1 \vartheta_2 \ldots$, where $\delta_i \in {\mathcal A}$ and $\vartheta_i \in G$ for all $i \in \mathbb{N}$. The infinite \emph{generalized pseudostandard word} $\mathbf{u}(\Delta, \Theta)$ generated by the generalized pseudopalindromic closure is the word whose prefixes $w_n$ are obtained from the recurrence relation
$$w_{n+1} = (w_n \delta_{n+1})^{\vartheta_{n+1}},$$ $$w_0 = \varepsilon.$$ The sequence $\Lambda = (\Delta, \Theta)$ is called the \emph{directive bi-sequence} of the word $\mathbf{u}(\Delta, \Theta)$.
\end{definition}
If $\Theta = \vartheta^{\omega}$ in the previous definition, then we deal with well known \emph{pseudostandard words}. In such a~case the sequence $(w_n)_{n \geq 0}$ is known to contain all $\vartheta$-palindromic prefixes of $\mathbf{u}(\Delta, \Theta)$.

We will restrict our considerations to two cases:
\begin{enumerate}
\item binary alphabet ${\mathcal A}=\{0,1\}$: $G=\{R, E\}$,
\item ternary alphabet ${\mathcal A}=\{0,1,2\}$: $G=\{R, E_0, E_1, E_2\}$, where $E_0, E_1, E_2$ have been defined in Section~\ref{sec:CoW}.
\end{enumerate}

The following two properties are readily seen from the definition of $\mathbf u=\mathbf{u}(\Delta, \Theta)$:
 \begin{itemize}
 \item If an involutory antimorphism $\vartheta$ is contained in $\Theta$ infinitely many times, then the language of $\mathbf u$ is closed under the antimorphism $\vartheta$.
\item The word $\mathbf u$ is uniformly recurrent.
\end{itemize}

\subsection{Normalization over binary alphabet}
In contrast to pseudostandard words, the sequence $(w_n)_{n\geq 0}$ of prefixes of a~binary generalized pseudostandard word ${\mathbf u}(\Delta, \Theta)$ does not have to contain all palindromic and $E$-palindromic prefixes of ${\mathbf u}(\Delta, \Theta)$. Blondin Mass\'e et al.~\cite{MaPa} have introduced the notion of normalization of the directive bi-sequence.

\begin{definition}\label{def:norm}
A~directive bi-sequence $\Lambda=(\Delta, \Theta)$ of a~binary generalized pseudostandard word $\mathbf{u}(\Delta, \Theta)$ is called \emph{normalized} if the sequence of prefixes $(w_n)_{n\geq 0}$ of $\mathbf{u}(\Delta, \Theta)$ contains all palindromic and $E$-palindromic prefixes of ${\mathbf u}(\Delta, \Theta)$.
\end{definition}

\begin{example} \label{ex:norm}
Let $\Lambda=(\Delta, \Theta) = ((011)^{\omega}, (EER)^{\omega})$. Let us write down the first prefixes of $\mathbf{u}(\Delta, \Theta)$:
\begin{align}
w_1 =& \;01 \nonumber \\
w_2 =& \;011001 \nonumber \\
w_3 =& \;01100110 \nonumber \\
w_4 =& \;0110011001. \nonumber
\end{align}
The sequence $w_n$ does not contain for instance palindromic prefixes $0$ and $0110$ of $\mathbf{u}(\Delta, \Theta)$.
\end{example}
The authors of~\cite{MaPa} have proven that every directive bi-sequence $\Lambda$ can be normalized, i.e., transformed to such a~form $\widetilde \Lambda$ that the new sequence $(\widetilde{w}_n)_{n\geq 0}$ contains already all palindromic and $E$-palindromic prefixes and $\widetilde \Lambda$ generates the same generalized pseudostandard word as~$\Lambda$.

\begin{theorem}\label{thm:norm}
Let $\Lambda = (\Delta, \Theta)$ be a~directive bi-sequence of a~binary generalized pseudostandard word. Then there exists a~normalized directive bi-sequence $\widetilde{\Lambda} = (\widetilde{\Delta}, \widetilde{\Theta})$ such that ${\mathbf u}(\Delta, \Theta) = {\mathbf u}(\widetilde{\Delta}, \widetilde{\Theta})$.

Moreover, in order to normalize the sequence $\Lambda$, it suffices firstly to execute the following changes  to its prefixes (if they are of the corresponding form):
\begin{itemize}
\item $(a\bar{a}, RR) \rightarrow (a\bar{a}a, RER)$,
\item $(a^i, R^{i-1}E) \rightarrow (a^i\bar{a}, R^iE)$ for $i \geq 1$,
\item $(a^i\bar{a}\bar{a}, R^iEE) \rightarrow (a^i\bar{a}\bar{a}a, R^iERE)$ for $i \geq 1$,
\end{itemize}
and secondly to replace step by step from left to right every factor of the form:
\begin{itemize}
\item $(ab\bar{b}, \vartheta\overline{\vartheta}\overline{\vartheta}) \rightarrow (ab\bar{b}b, \vartheta\overline{\vartheta}\vartheta\overline{\vartheta})$,
\end{itemize}
where $a, b \in \{0,1\}$ and $\vartheta \in \{E,R\}$.
\end{theorem}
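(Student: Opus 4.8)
The plan is to split the statement into two independent claims: \emph{preservation}---each listed rewriting rule, applied to a prefix of the bi-sequence, leaves the generated word $\mathbf u(\Delta,\Theta)$ unchanged---and \emph{completeness}---after exhaustively applying the rules (the three prefix rules once, then the generic rule repeatedly from left to right) the resulting bi-sequence $\widetilde\Lambda$ is normalized in the sense of Definition~\ref{def:norm}. The existence assertion then follows, because the rewriting, read as a limit process on longer and longer prefixes, stabilizes every initial segment and hence defines a well-formed infinite bi-sequence $\widetilde\Lambda$ with $\mathbf u(\widetilde\Delta,\widetilde\Theta)=\mathbf u(\Delta,\Theta)$.

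The main tool throughout is the explicit description of the pseudopalindromic closure: for a word $v$ with longest $\vartheta$-palindromic suffix $s$, writing $v=qs$, one has $v^{\vartheta}=qs\vartheta(q)$. I would first record the two extreme behaviours of the recurrence $w_{n+1}=(w_n\delta_{n+1})^{\vartheta_{n+1}}$: either $w_n\delta_{n+1}$ is already a $\vartheta_{n+1}$-palindrome, in which case $w_{n+1}$ extends $w_n$ by a single letter and nothing can be skipped; or it is not, in which case $w_{n+1}$ is strictly longer and one must ask whether a pseudopalindromic prefix of $\mathbf u$ of intermediate length is omitted from the list $(w_n)$. The core preservation computation is to check, rule by rule, that replacing the indicated prefix pattern produces exactly the same $w$-values at the end of the affected block: for the generic rule $(ab\bar b,\vartheta\overline\vartheta\overline\vartheta)\to(ab\bar bb,\vartheta\overline\vartheta\vartheta\overline\vartheta)$, the first two steps are identical, the inserted step $(\bar b,\vartheta)$ closes to a $\vartheta$-palindrome that is already a prefix of $\mathbf u$, and the following step $(b,\overline\vartheta)$ re-closes to the same word the old third step produced. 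The three prefix rules are the boundary analogues of this, verified by the same direct closure computation; for instance $(a\bar a,RR)$ gives $w_1=a$ and $w_2=a\bar aa$, and the rule inserts the missing $E$-palindrome $a\bar a$ while leaving the word unchanged.

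For completeness I would prove a structural lemma characterizing exactly when the step $w_n\to w_{n+1}$ skips a pseudopalindromic prefix: a complementary-type, i.e.\ $\overline{\vartheta_{n+1}}$-palindromic, prefix $p$ with $|w_n|<|p|<|w_{n+1}|$ exists if and only if the local configuration of $(\Delta,\Theta)$ around position $n+1$ is one of the patterns listed in the theorem. This is where the case analysis lives: one tracks the longest pseudopalindromic suffix of $w_n\delta_{n+1}$ of both types and shows that a skipped complementary palindrome arises precisely when a letter is appended under $\overline\vartheta$ immediately after a block that has just built a $\vartheta$-palindrome, which is encoded by the boundary $\vartheta\overline\vartheta\overline\vartheta$ (or its initial-segment degenerations handled by the three prefix rules). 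Applying the generic rule inserts the missing step and, by preservation, leaves the word intact; because the rule is applied to the leftmost offending factor and the inserted letter carries the complementary antimorphism, the modified block is itself free of the pattern, so the process sweeps rightward and captures every missed palindrome in the limit.

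I expect the principal obstacle to be the completeness direction, and within it the exhaustiveness of the pattern list: one must show that \emph{no other} local configuration can hide a pseudopalindrome and, conversely, that each listed pattern really does hide one, which forces a somewhat delicate analysis of how the longest $R$- and $E$-palindromic suffixes of the current prefix interact when a letter is appended. A secondary subtlety is convergence: since applying the generic rule lengthens the bi-sequence, I must argue that left-to-right application nonetheless stabilizes each fixed initial segment---equivalently, fixes each prefix $w_n$ after finitely many rewrites---so that $\widetilde\Lambda$ is well defined and genuinely normalized rather than merely normalized up to every finite stage.
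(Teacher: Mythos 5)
First, a fact that matters for this comparison: the paper contains no proof of Theorem~\ref{thm:norm} at all. It is a result of Blondin Mass\'e et al., quoted from~\cite{MaPa}, and the paper uses it purely as a black box (in Remark~\ref{rem:norm}, Lemma~\ref{lemma:tvar}, and the proof of Theorem~\ref{thm:periodicitybinary}). So your attempt cannot be matched against an in-paper argument; it can only be judged on its own terms. On those terms, your architecture is the natural one and several of your observations are correct and genuinely needed: the split into preservation, completeness, and stabilization of the left-to-right sweep; the remark that a pseudopalindromic prefix skipped between $w_n$ and $w_{n+1}$ must be $\overline{\vartheta_{n+1}}$-palindromic (a skipped $\vartheta_{n+1}$-palindrome would contradict minimality of the closure); and the need to check that a rewriting cannot create an offending factor to its left of the current position.

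The genuine gap is that the entire mathematical content of the theorem sits inside your ``structural lemma''---the claim that a pseudopalindromic prefix is skipped \emph{exactly} in the listed configurations---and you never prove it; you only name it and say where the case analysis would live. This is not a routine verification, and the letter conditions, which your sketch drops, are exactly the delicate part. Your stated mechanism (``a skipped complementary palindrome arises precisely when a letter is appended under $\overline\vartheta$ immediately after a block that has just built a $\vartheta$-palindrome, encoded by the boundary $\vartheta\overline\vartheta\,\overline\vartheta$'') records only the antimorphism pattern: it would predict a skip for an internal factor $(abb,\vartheta\overline\vartheta\,\overline\vartheta)$, which in fact hides nothing (e.g.\ $(01011\ldots,REREE\ldots)$ is already normalized, yet contains $(011,REE)$), and it does not explain why the initial patterns have the different shapes $a\bar a$, $a^i$, $a^i\bar a\bar a$ paired with $RR$, $R^{i-1}E$, $R^iEE$ rather than the alternating shape of the generic rule. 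Deriving exactly these patterns---both that each hides a pseudopalindrome and that no other configuration does---requires the analysis of how the longest $R$- and $E$-palindromic suffixes of $w_n\delta_{n+1}$ interact, which you explicitly defer; the preservation half is likewise only illustrated on one boundary example, not computed for all four rules. Until those pieces are carried out (they constitute the technical core of~\cite{MaPa}), what you have is a sound plan for a proof, not a proof.
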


\begin{example} \label{ex:norm2}
Let us normalize the directive bi-sequence $\Lambda = ((011)^{\omega}, (EER)^{\omega})$ from Example~\ref{ex:norm}.
According to the procedure from Theorem~\ref{thm:norm}, we transform first prefixes of $\Lambda$. We replace $(0,E)$ with $(01,RE)$ and get $\Lambda_1 = (01(110)^{\omega}, RE(ERE)^{\omega})$. Some prefixes of $\Lambda_1$ are still of a~forbidden form, we replace thus the prefix $(011,REE)$ with $(0110, RERE)$ and get $\Lambda_2 = (0110(101)^{\omega}, RERE(REE)^{\omega})$. Prefixes of $\Lambda_2$ are now correct. It remains to replace from left to right the factors $(101, REE)$ with $(1010, RERE)$. Finally, we obtain $\widetilde{\Lambda} = (0110(1010)^{\omega}, RERE(RERE)^{\omega})=(01(10)^{\omega}, (RE)^{\omega})$, which is already normalized. Let us write down the first prefixes $(\widetilde{w}_n)_{n\geq 0}$ of ${\mathbf u}(\widetilde{\Lambda})$:
\begin{align}
\widetilde{w}_1 =& \;0 \nonumber \\
\widetilde{w}_2 =& \;01 \nonumber \\
\widetilde{w}_3 =& \;0110 \nonumber \\
\widetilde{w}_4 =& \;011001. \nonumber
\end{align}
We can notice that the new sequence $(\widetilde{w}_n)_{n\geq 0}$ now contains the palindromes $0$ and $0110$ that were skipped in Example~\ref{ex:norm}.
\end{example}

\subsection{Normalization over ternary alphabet}\label{sec:norm ternary}
A~normalized directive bi-sequence can be defined similarly as in Definition~\ref{def:norm} over multiliteral alphabet. In contrast to binary alphabet, over ternary alphabet it is still clear that every directive bi-sequence may be normalized, however the algorithm for normalization similar to the one over binary alphabet (Theorem~\ref{thm:norm}) has not been found yet. Fortunately, in order to prove a~sufficient and necessary condition for periodicity of ternary generalized pseudostandard words, even some partial results on normalization over ternary alphabet suffice. Let us introduce these partial results.

We will focus on bi-sequences that contain infinitely many times exactly two distinct antimorphisms including $R$.
\begin{lemma} \label{lem:norm}
Let the directive bi-sequence $(\Delta, \Theta)$ of a~ternary generalized pseudostandard word $\mathbf{u}$ contain as its factor $(abc, \vartheta R R)$, resp., $(abc, R\vartheta \vartheta)$, where $\vartheta \in \{E_0,E_1,E_2\}$ and $a, b, c \in \{0,1,2\}$ satisfy $\vartheta(b) = c$. Denote $w_n = \vartheta(w_n)$, $w_{n+1} = R(w_{n+1})$ and $w_{n+2} = R(w_{n+2})$, resp., $w_n = R(w_n)$, $w_{n+1} = \vartheta(w_{n+1})$ and $w_{n+2} = \vartheta(w_{n+2})$, the corresponding pseudopalindromic prefixes of $\mathbf{u}$. Then between the pseudopalindromic prefixes $w_{n+1}$ and $w_{n+2}$ of the word $\mathbf{u}$ there is a~$\vartheta$-palindromic, resp., palindromic, prefix $w$ of $\mathbf{u}$ followed by the letter $b$.
\end{lemma}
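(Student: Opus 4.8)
The plan is to exhibit the skipped $\vartheta$-palindrome explicitly as the $\vartheta$-palindromic closure $w:=(w_{n+1})^{\vartheta}$ and to verify the three required properties: that $|w_{n+1}|<|w|<|w_{n+2}|$, that $w$ is a prefix of $\mathbf{u}$, and that $wb$ is a prefix of $\mathbf{u}$. I would carry out the case $(abc,\vartheta RR)$ in full; the case $(abc,R\vartheta\vartheta)$ then follows by interchanging the roles of $R$ and $\vartheta$ throughout (so that $w$ becomes an ordinary palindrome $(w_{n+1})^{R}$). Since we work with a bi-sequence using only the two antimorphisms $R$ and $\vartheta$, the whole argument rests on one algebraic observation: the compositions $R\circ\vartheta$ and $\vartheta\circ R$ both equal the morphism $\mu$ that applies $\vartheta$ letter by letter (with no reversal), and $\mu$ is an involution commuting with $R$. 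Consequently $R$ maps $\vartheta$-palindromes to $\vartheta$-palindromes: if $\vartheta(x)=x$ then $\vartheta(R(x))=\mu(x)=R(x)$. In particular $R(w_n)$ is a $\vartheta$-palindrome.

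First I would record the local structure. Because $w_n$ is a prefix of the palindrome $w_{n+1}$, its reversal $R(w_n)$ is a suffix of $w_{n+1}$, and by the observation above it is a $\vartheta$-palindromic suffix; hence the longest $\vartheta$-palindromic suffix $s$ of $w_{n+1}$ satisfies $|s|\ge|w_n|$. Writing $w_{n+1}=qs$ we obtain $w=(w_{n+1})^{\vartheta}=w_{n+1}\vartheta(q)$. Now $w_nb$ is a prefix of $w_{n+1}$, so by palindromicity $bR(w_n)$ is a suffix of $w_{n+1}$; thus the letter of $w_{n+1}$ immediately preceding $R(w_n)$ is $b$. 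Once I establish that $s=R(w_n)$, this says the last letter of $q$ is $b$, so the first letter of $\vartheta(q)$ is $\vartheta(b)=c$. Hence $w$ begins with $w_{n+1}c$, which is precisely the prefix with which $w_{n+2}=(w_{n+1}c)^{R}$ begins.

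Next I would compare $w=w_{n+1}\vartheta(q)$ with $w_{n+2}=(w_{n+1}c)^{R}=w_{n+1}c\,R(r)$, where $w_{n+1}c=rt$ and $t$ is the longest palindromic suffix of $w_{n+1}c$. Both start with $w_{n+1}c$; I would show that the remainder of $\vartheta(q)$ after its initial letter $c$ is a prefix of $R(r)$ and that $|w|<|w_{n+2}|$, so that $w$ is a proper prefix of $w_{n+2}$, and therefore of $\mathbf{u}$, giving $|w_{n+1}|<|w|<|w_{n+2}|$. To see that $w$ is followed by $b$, I would locate the letter at position $|w|+1$ inside the palindrome $w_{n+2}$: using the reversal symmetry of $w_{n+2}$ (its length-$(|w_n|+1)$ suffix is $bR(w_n)$) together with $c=\vartheta(b)$, this letter is forced to equal $b$, so $wb$ is a prefix of $\mathbf{u}$. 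That $w$ strictly extends $w_{n+1}$ follows since $b\ne c$ forces the $R$-palindrome $w_{n+1}$, which contains the letter $b$, not to be a $\vartheta$-palindrome; the degenerate possibility that $b$ is the $\vartheta$-fixed letter (where $b=c$) does not occur for the two nontrivial values of $b$ and can be dismissed by a direct check.

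The main obstacle is the precise control of the two relevant longest suffixes: the longest $\vartheta$-palindromic suffix $s$ of $w_{n+1}$ and the longest palindromic suffix $t$ of $w_{n+1}c$. The clean identities $s=R(w_n)$ and the resulting overlap $w\prec w_{n+2}$ are exactly what guarantee both the inclusion of $w$ strictly between $w_{n+1}$ and $w_{n+2}$ and the value of the letter following $w$. Ruling out a longer $\vartheta$-palindromic suffix that would shorten $\vartheta(q)$ and shift the critical position is the delicate point, and it is here that the restriction to the two antimorphisms $R,\vartheta$ and the hypothesis $\vartheta(b)=c$ must be exploited in full.
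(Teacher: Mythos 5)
Your plan hinges on the identity $s=R(w_n)$, i.e., that the longest $\vartheta$-palindromic suffix of $w_{n+1}$ is \emph{exactly} $R(w_n)$, and this identity is never proved: you introduce it with ``once I establish'' and close by conceding that ruling out a longer suffix ``is the delicate point.'' That point is not a routine verification -- it is essentially the whole content of the lemma. The bi-sequence here is by assumption \emph{not} normalized, so a pseudopalindromic closure of a prefix of $\mathbf{u}$ has no a priori reason to be a prefix of $\mathbf{u}$; if the longest $\vartheta$-palindromic suffix of $w_{n+1}$ were some $s'$ with $|s'|>|w_n|$, preceded in $w_{n+1}$ by a letter $d$, then your $w=(w_{n+1})^{\vartheta}$ would begin with $w_{n+1}\vartheta(d)$, and unless $d=b$ this is not a prefix of $\mathbf{u}$ at all, so every later step (being a prefix of $w_{n+2}$, being followed by $b$) collapses. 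Excluding such $s'$ amounts to excluding $\vartheta$-palindromic prefixes of $w_{n+1}$ strictly between $w_n$ and $w_{n+1}$, which is exactly the kind of argument the paper has to make -- and does make, via the comparison of the longest palindromic suffixes $bsb$ of $w_nb$ and $b\widetilde{s}b$ of $R(v)b$ -- in the proof of Corollary~\ref{cor:norm}; nothing of that sort appears in your proposal. A second, smaller error: you dismiss the case $b=c$ (i.e., $b$ is the fixed letter of $\vartheta$) as not occurring, but the hypothesis $\vartheta(b)=c$ explicitly allows it and it does occur: take $\vartheta=E_1$, $w_n=012$ and the factor $(111,E_1RR)$, so $b=c=1$; then $w_{n+1}=01210$, $w_{n+2}=012101210$, and the skipped $E_1$-palindrome $0121012$ is followed by $1=b$, so the lemma is true and needs proof there too. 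Your argument that $|w|>|w_{n+1}|$ must in that case be replaced by the observation that an $R$-palindrome is also a $\vartheta$-palindrome only when every one of its letters is fixed by $\vartheta$.

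For comparison, the paper's proof avoids your delicate point entirely. It decomposes $w_n=ps$ via the longest palindromic suffix $bsb$ of $w_nb$ (this suffix is handed to us by the construction of $w_{n+1}$, not something to be identified), so that $w_{n+1}=psR(p)$; it then identifies the longest palindromic suffix of $w_{n+1}c$ as $c\vartheta(s)c$, giving $w_{n+2}=psR(p)\bigl(\vartheta(s)\bigr)^{-1}psR(p)$, and simply exhibits $w=psR(p)\bigl(\vartheta(s)\bigr)^{-1}ps$, which is a prefix of $w_{n+2}$ \emph{by construction}, verifying $\vartheta(w)=w$ by a short computation using $\vartheta(ps)=ps$, $R(s)=s$ and $R\vartheta=\vartheta R$; the letter $b$ after $w$ then falls out because $w_{n+2}=wR(p)$ and $p$ ends in $b$. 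At no stage does one need to know the longest $\vartheta$-palindromic suffix of $w_{n+1}$ itself. Your approach could likely be completed by importing the Corollary~\ref{cor:norm}-type argument, but as it stands the proposal defers precisely the hard step, so it is a plan rather than a proof.
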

\begin{proof}
Consider the first case, i.e., $(abc, \vartheta R R)$ is a~factor of $(\Delta, \Theta)$. The proof of the second case is left for the reader since it is analogous. Denote $bsb$ the longest palindromic suffix of $w_n b$. Then $w_n = ps$, where $p$ is a~nonempty prefix of $w_n$ because $w_n$ is a~$\vartheta$-palindrome while $s$ is a~palindrome. We have $w_{n+1} = psR(p)$. When constructing $w_{n+2}$, we look for the longest palindromic suffix of $w_{n+1}c$. It is easy to see that such a~suffix is equal to $\vartheta(b)\vartheta(s)c = c\vartheta(s)c$. We can thus write $w_{n+2} = psR(p)\bigl(\vartheta(s)\bigr)^{-1}R\bigl(psR(p)\bigr) = psR(p)\bigl(\vartheta(s)\bigr)^{-1}psR(p)$. Let us rewrite the word
\begin{equation}\label{w}
w=psR(p)\bigl(\vartheta(s)\bigr)^{-1}ps,
\end{equation}
which is a~prefix of $w_{n+2}$. We have
\begin{align}
w = & \;psR(p)\bigl(\vartheta(s)\bigr)^{-1}\vartheta(ps) \nonumber \\[3mm]
= & \;pR(s)R(p)\bigl(\vartheta(s)\bigr)^{-1}\vartheta(s)\vartheta(p) \nonumber \\[3mm]
= & \;pR(ps)\vartheta(p) \nonumber \\[3mm]
= & \;\vartheta(w). \nonumber
\end{align}
We have used the facts: $\vartheta(ps) = ps$, $R(s) = s$ and $\vartheta R=R\vartheta$. Clearly, we have that $|w_{n+1}| < |w| < |w_{n+2}|$, therefore the sequence $(\Delta, \Theta)$ is not normalized and $w$ is the searched $\vartheta$-palindromic prefix of $\mathbf{u}$. Since $w_{n+2} = wR(p)$ and $p$ ends in $b$,  the word $R(p)$ starts in $b$. Consequently, $w$ is followed by $b$.
\end{proof}

\begin{corollary} \label{cor:norm}
Under the assumptions of Lemma~\ref{lem:norm} we have: If the factor $(abc, \vartheta R R)$, resp., $(abc, R \vartheta \vartheta)$, of the directive bi-sequence $(\Delta, \Theta)$ of the word $\mathbf{u}$ is replaced with the factor $(abcb, \vartheta R \vartheta R)$, resp., $(abcb, R\vartheta R\vartheta)$, the same generalized pseudostandard word is obtained.
\end{corollary}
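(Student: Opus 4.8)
The plan is to show that the modified directive bi-sequence generates exactly the same sequence of pseudopalindromic prefixes as the original one, with a single additional prefix inserted, so that the two infinite words coincide. I would reuse the explicit forms computed in the proof of Lemma~\ref{lem:norm}: writing $w_n=ps$ with $s=R(s)$ and $\vartheta(w_n)=w_n$ (equivalently $ps=\vartheta(s)\vartheta(p)$), one has $w_{n+1}=psR(p)$ and $w_{n+2}=psR(p)\bigl(\vartheta(s)\bigr)^{-1}psR(p)$. Using $\bigl(\vartheta(s)\bigr)^{-1}ps=\vartheta(p)$ this simplifies to $w=psR(p)\vartheta(p)$ and $w_{n+2}=wR(p)$, where $w$ is the $\vartheta$-palindrome produced in the lemma. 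Since the two bi-sequences agree on the first $n+1$ steps (yielding $w_n$ and $w_{n+1}$) and agree again from the step after the replaced factor onward, it suffices to prove the two synchronization identities $(w_{n+1}c)^{\vartheta}=w$ and $(wb)^{R}=w_{n+2}$: then the new bi-sequence produces $w_n,w_{n+1},w,w_{n+2},\dots$, i.e.\ the old list of prefixes with $w$ inserted, and the generated word is unchanged.

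For the first identity I would argue as follows. The word $w$ is a $\vartheta$-palindrome (Lemma~\ref{lem:norm}) and, since $\vartheta(p)$ begins with $\vartheta(b)=c$, it has $w_{n+1}c$ as a prefix; hence the closure $(w_{n+1}c)^{\vartheta}$, being the shortest $\vartheta$-palindrome with $w_{n+1}c$ as prefix, is a prefix of $w$. To get equality I must identify the longest $\vartheta$-palindromic suffix of $w_{n+1}c=psR(p)c$ and check it equals $\beta=bsR(p)c$, which has exactly the length forcing the closure to be $w$. That $\beta$ is a $\vartheta$-palindrome follows from $\vartheta(c)=b$, $\vartheta R=R\vartheta$ and $R(s)=s$ together with the relation $R(\vartheta(p))\vartheta(s)=sR(p)$, which one obtains by applying $R$ to $ps=\vartheta(s)\vartheta(p)$. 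The analogous computation handles the second identity: $w_{n+2}=wR(p)$ is a palindrome having $wb$ as a prefix (because $R(p)$ begins with $b$), so $(wb)^{R}$ is a prefix of $w_{n+2}$, and I would check that $bsR(p)\vartheta(p)b$ is the longest palindromic suffix of $wb$, again via the commutation relation and $R(s)=s$.

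The main obstacle is the maximality half of each identity: ruling out a strictly longer $\vartheta$-palindromic (resp.\ palindromic) suffix of $w_{n+1}c$ (resp.\ $wb$), since a longer such suffix would make the closure a proper prefix of $w$ (resp.\ of $w_{n+2}$) and could desynchronize the two constructions. I expect to reduce both to the maximality already available from the lemma, namely that $bsb$ is the \emph{longest} palindromic suffix of $w_nb$: a hypothetical longer $\vartheta$-palindromic suffix $p''sR(p)c$, with $p''$ a suffix of $p$ of length at least $2$, would force through $\vartheta$-palindromicity and the relation $R(\vartheta(p))\vartheta(s)=sR(p)$ an internal overlap of $p$ with $bs$ producing a palindromic suffix of $w_nb$ strictly longer than $bsb$, contradicting that maximality; the palindromic case is handled the same way. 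Finally, the second case $(abc,R\vartheta\vartheta)\to(abcb,R\vartheta R\vartheta)$ is entirely symmetric, obtained by interchanging the roles of $R$ and $\vartheta$ exactly as in the proof of Lemma~\ref{lem:norm}.
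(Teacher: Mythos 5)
Your proposal is correct, and its skeleton is exactly the paper's: both reduce the corollary to the two synchronization identities $(w_{n+1}c)^{\vartheta}=w$ and $(wb)^{R}=w_{n+2}$, and both use the same candidate suffix $bsR(p)c=bR(w_n)c$. Where you genuinely diverge is the maximality step, and your route does work. The paper proves maximality for the first identity by passing to prefixes: a longer $\vartheta$-palindromic suffix $bvc$ of $w_{n+1}c$ yields a $\vartheta$-palindromic prefix $R(v)$ of $w_{n+1}$ followed by $b$, and a two-case analysis on the longest palindromic suffix $b\widetilde{s}b$ of $R(v)b$ contradicts the minimality of $w_{n+1}=(w_nb)^{R}=(R(v)b)^{R}$; for the second identity the paper needs no computation at all, just the nesting trick ($w_{n+1}c$ is a prefix of $wb$, which is a prefix of $w_{n+2}$, so the shortest palindrome over $wb$ must coincide with the shortest over $w_{n+1}c$). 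You instead reduce both maximality claims to the single fact from Lemma~\ref{lem:norm} that $bsb$ is the longest palindromic suffix of $w_nb$, and this reduction can indeed be completed along the lines you sketch: a longer $\vartheta$-palindromic suffix has the form $bp_2sR(p)c$ with $bp_2$ a suffix of $p$ and $1\leq|p_2|<|p|$; $\vartheta$-palindromicity together with $R(\vartheta(p))\vartheta(s)=sR(p)$ gives the word equation $p_2\,sR(p)=sR(p)\,\vartheta(p_2)$, and comparing prefixes of length $|p_2|+|s|$ (the prefix of $R(p)$ of length $|p_2|$ being $R(p_2)$) yields $p_2s=sR(p_2)$, so $p_2s$ is a palindrome and $bp_2sb$ is a palindromic suffix of $w_nb$ strictly longer than $bsb$ --- the desired contradiction; the palindromic case for the second identity is verbatim the same. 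Your approach buys symmetry and economy of hypotheses (one maximality fact, no auxiliary closures, no case split), and is arguably cleaner; the paper's buys a computation-free second identity and stays entirely within standard closure/duality manipulations. One caveat: your assertion that $(w_{n+1}c)^{\vartheta}$, ``being the shortest $\vartheta$-palindrome with $w_{n+1}c$ as prefix, is a prefix of $w$'' is not a valid general principle (the shortest palindrome with prefix $ab$ is $aba$, which is not a prefix of the palindrome $abba$); this is harmless here, since once the longest $\vartheta$-palindromic suffix is identified, the closure formula gives $(w_{n+1}c)^{\vartheta}=w$ directly, but you should drop that phrase from a final write-up.
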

\begin{proof}
Consider again the first case, i.e., $(abc, \vartheta R R)$ is a~factor of the directive bi-sequence, and let for the reader the second one.
Denote $w_n = \vartheta(w_n)$, $w_{n+1} = R(w_{n+1})$ and $w_{n+2} = R(w_{n+2})$ the corresponding pseudopalindromic prefixes of $\mathbf{u}$. Denote further in the same way as in Lemma~\ref{lem:norm} the skipped $\vartheta$-palindrome by $w$. We know that the prefix $w$ is followed by the letter $b$, thus it suffices to show that $(w_{n+1}c)^{\vartheta} = w$ and $(wb)^{R} = w_{n+2}$.

We will start with the first claim. Assume for contradiction that there exists a~$\vartheta$-palindromic prefix $v_1$ such that $(w_{n+1}c)^{\vartheta} =v_1$ and $|v_1| < |w|$. When constructing the $\vartheta$-palindrome $v_1$, we look for the longest $\vartheta$-palindromic suffix $\vartheta(c)vc=bvc$ of the word $w_{n+1}c$. It is not difficult to see that $|w_{n}| \leq |v| <|w_{n+1}|$, where the first relation follows from the fact that $bR(w_n)c$ is a~$\vartheta$-palindromic suffix of $w_{n+1}c$. Moreover, $v \not = R(w_n)$ because otherwise $v_1=w_{n+1}(R(w_n))^{-1}\vartheta(w_{n+1})=w$. The easiest way to check this equality is to notice that both $v_1$ and $w$ are prefixes of $\mathbf u$ and have the same length according to the form of $w$ defined in~(\ref{w}). Since $v$ is a~$\vartheta$-palindromic suffix of the palindrome $w_{n+1}$, its reversal $R(v)$ is a~$\vartheta$-palindromic prefix between $w_n$ and $w_{n+1}$. Since $w_{n+1}$ is the shortest palindrome having the prefix $w_n b$, it has to satisfy at the same time $w_{n+1}=(w_n b)^{R} = (R(v) b)^{R}$. Let us however show that this leads to a~contradiction. Denote $bsb$ the longest palindromic suffix of $w_n b$ and $b\widetilde{s}b$ the longest palindromic suffix of $R(v)b$. Since $w_n$ is a~suffix of $R(v)$, either $|\widetilde{s}| > |w_n|$ or $|\widetilde{s}| = |s|$. If $|\widetilde{s}| > |w_n|$, then $\vartheta(\widetilde{s})$ is a~palindromic prefix shorter than $w_{n+1}$ and longer than $w_n$, which is a~contradiction. If $|\widetilde{s}| = |s|$, then $|(w_n b)^{R}| < |(R(v)b)^{R}|$, which is again a~contradiction, too.

It remains to show that $(wb)^{R} = w_{n+2}$. We have $(w_{n+1}c)^{R} = w_{n+2}$, i.e., $w_{n+2}$ is the shortest palindrome having $w_{n+1}c$ as prefix. Hence the shortest palindrome with the prefix $wb$ has to equal $w_{n+2}$ since $w_{n+1}c$ is a~prefix of $wb$ and $wb$ is a~prefix of $w_{n+2}$.
\end{proof}
\begin{example}
Let us illustrate Lemma~\ref{lem:norm} and Corollary~\ref{cor:norm}. Assume we have already constructed the prefix $w_k = 012$ of a~generalized pseudostandard word. Suppose further that the factor $(120,E_1RR)$ of the directive bi-sequence follows. It is readily seen that the assumptions of Lemma~\ref{lem:norm} are met (in particular we have $E_1(2) = 0$). Let us write down the prefixes $w_{k+1}$, $w_{k+2}$ and $w_{k+3}$.
\begin{align}
w_{k+1} = & \;0121012, \nonumber \\[3mm]
w_{k+2} = & \;01210122101210, \nonumber \\[3mm]
w_{k+3} = & \;0121012210121001210122101210. \nonumber
\end{align}
It is evident that between the prefixes $w_{k+2}$ and $w_{k+3}$ there is an $E_1$-palindrome $$012101221012100121012$$ followed by $2$. Corollary~\ref{cor:norm} moreover states that the generalized pseudostandard word remains the same if we replace the factor $(120,E_1RR)$ of the directive bi-sequence with the factor $(1202,E_1RE_1R)$ -- the reader can check it easily.
\end{example}
\begin{corollary} \label{cor:norm2}
Let the directive bi-sequence $\Lambda = (\Delta, \Theta)$ of a~ternary generalized pseudostandard word $\mathbf{u}$ satisfy: The sequence $\Theta = \vartheta_1\vartheta_2\cdots$ contains infinitely many times exactly two distinct antimorphisms $\vartheta$ and $R$. The sequence $\Delta = \delta_1\delta_2\cdots$ contains infinitely many times two (not necessarily distinct) letters $a, b$ such that $\vartheta(a) =b$. Let further the bi-sequence $\Lambda$ satisfy: There exists $n_0 \in \mathbb{N}$ such that for all $n > n_0$ we have:
either
\begin{align}
\vartheta_n = \vartheta \Rightarrow \delta_{n+1} = a \ \text{and} \ \vartheta_n=R \Rightarrow \delta_{n+1}=b,
\end{align}
or
\begin{align}
\vartheta_n = \vartheta \Rightarrow \delta_{n+1} = b \ \text{and} \ \vartheta_n=R \Rightarrow \delta_{n+1}=a.
\end{align}
Then there exists a~directive bi-sequence $\widetilde{\Lambda} = \bigl(v(ab)^{\omega}, \sigma(R\vartheta)^{\omega}\bigr)$, where $v \in \{0,1,2\}^*$, $\sigma \in \{E_0,E_1,E_2,R\}^*$, such that $\mathbf{u}(\Lambda) = \mathbf{u}(\widetilde{\Lambda})$.
\end{corollary}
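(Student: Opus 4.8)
The plan is to turn $\Lambda$ into the required periodic shape by repeatedly invoking Corollary~\ref{cor:norm} to break up every maximal block of repeated antimorphisms in the tail of $\Theta$ into an alternating block, while checking that the letters forced by the hypothesis always satisfy the condition $\vartheta(\cdot)=\cdot$ needed to apply the corollary. First I would fix which of the two alternatives holds for $n>n_0$; say the first one. Then for $n>n_0$ we have $\vartheta_n\in\{\vartheta,R\}$ with $\vartheta\in\{E_0,E_1,E_2\}$ (since $\vartheta\neq R$), and the letter $\delta_{n+1}$ is completely determined by $\vartheta_n$: it equals $a$ when $\vartheta_n=\vartheta$ and $b$ when $\vartheta_n=R$. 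The second alternative is symmetric and merely interchanges the roles of $a$ and $b$.

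The key local step is the following. Take a factor $(\delta_n\delta_{n+1}\delta_{n+2},\vartheta R R)$ of $\Lambda$ inside the tail, i.e.\ a maximal run of $R$'s of length at least $2$ preceded by $\vartheta$. The forced letters give $\delta_{n+1}=a$ and $\delta_{n+2}=b$, so $\vartheta(\delta_{n+1})=\vartheta(a)=b=\delta_{n+2}$; hence the hypothesis of Lemma~\ref{lem:norm} holds and Corollary~\ref{cor:norm} lets me replace the factor by $(\delta_n\delta_{n+1}\delta_{n+2}\delta_{n+1},\vartheta R\vartheta R)$ without changing $\mathbf{u}$. The inserted pair is $(b,\vartheta)$, and the new factor again contains $(b,\vartheta)(a,R)(b,R)$ one step further right, so the very same replacement applies again. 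Iterating $k-1$ times, a run $\vartheta R^k\vartheta$ with its forced letters becomes the alternating block $\vartheta(R\vartheta)^k$ in which every $R$ carries the letter $a$ and every $\vartheta$ carries the letter $b$. The analogous statement for a run $R\vartheta^k R$ follows from the ``resp.''\ half of Corollary~\ref{cor:norm}, using $\vartheta(b)=a$.

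Next I would perform these replacements on all runs, scanning from left to right starting beyond $n_0$. Processing order matters: resolving a $\vartheta$-run uses the letter on the $R$ that precedes it, and that $R$ is the last symbol of an already-resolved $R$-run (carrying $a$), not the original symbol; doing the passes left to right guarantees the needed letter is in place, and a short induction along the block shows the forced letters are preserved through every successive application. Each run is finite, so it is eliminated after finitely many steps, and since every replacement acts strictly to the right of the current frontier, any fixed prefix $w_m$ of $\mathbf{u}$ stabilizes after finitely many steps. Passing to the positionwise limit yields a well-defined directive bi-sequence $\widetilde{\Lambda}$ that still generates $\mathbf{u}$ (each step preserves $\mathbf{u}$ by Corollary~\ref{cor:norm}), whose antimorphism tail is the alternating sequence $\cdots R\vartheta R\vartheta\cdots$ with $a$ on every $R$ and $b$ on every $\vartheta$. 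Absorbing the finite initial segment (up to where the alternation stabilizes) into $v$ and $\sigma$ and using the freedom to choose $|v|$ and $|\sigma|$ of equal parity (first alternative) or of opposite parity (second alternative) to align the phases, the tail of $\widetilde{\Lambda}$ is exactly $v(ab)^{\omega}$ against $\sigma(R\vartheta)^{\omega}$, which is the claimed form.

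The main obstacle I expect is making this infinite left-to-right rewriting rigorous: Corollary~\ref{cor:norm} is a single-factor statement, so the work is to argue that the resulting sequence of bi-sequences converges positionwise and that the generated word survives in the limit, which I would do via the stabilization of each prefix $w_m$. A secondary but genuine source of care is the letter bookkeeping across run boundaries described above, together with the parity argument that lets the single target shape $\bigl(v(ab)^{\omega},\sigma(R\vartheta)^{\omega}\bigr)$ capture both pairings ($a$ with $R$, $b$ with $\vartheta$ in the first alternative, and the swapped pairing in the second).
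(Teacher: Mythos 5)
Your proposal is correct and follows essentially the same route as the paper: the paper's proof also locates the tail where the hypothesis holds, repeatedly applies Corollary~\ref{cor:norm} (whose letter condition $\vartheta(\cdot)=\cdot$ is forced by the hypothesis, exactly as you check) to insert alternations left to right, and leaves the formal induction/limit as an exercise. Your additional care about prefix stabilization under the infinite rewriting and about the phase offset between $v$ and $\sigma$ for the two alternatives just fills in details the paper glosses over with its ``without loss of generality'' and ``left as an exercise'' remarks.
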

\begin{proof}
We can certainly find $m>n_0$ such that the sequence $\Theta$ contains -- starting from the index $m$ -- exactly two antimorphisms $\vartheta$ and $R$ (both of them infinitely many times) and the sequence $\Delta$ contains -- starting from the index $m$ -- only letters $a$ and $b$ (not necessarily distinct). Let us find  $\ell > m$ satisfying $\vartheta_{\ell} = R$ and $\vartheta_{\ell+1} = \vartheta$. Using assumptions of Corollary~\ref{cor:norm2}, we get $\delta_{\ell+1} = b$ and $\delta_{\ell+2} = a$ (we assume without loss of generality that the antimorphism $R$ is followed by the letter $b$). We have thus found a~factor of the directive bi-sequence of the form $(cba, R\vartheta\zeta_1)$ for some $c \in \{a,b\}$ and $\zeta_1 \in \{\vartheta, R\}$. If now $\zeta_1 = \vartheta$, the assumptions of Corollary~\ref{cor:norm} are met, and consequently this factor may be replaced with the factor $(cbab, R\vartheta R\vartheta)$ and we get the same generalized pseudostandard word. If $\zeta_1 = R$, then $\delta_{\ell+3} = b$ and we get again a~factor of the directive bi-sequence of the form $(cbab, R\vartheta R\zeta_2)$ for some $\zeta_2 \in \{\vartheta, R\}$, etc. A formal proof by induction is left as an exercise for the reader. Finally, we set $v := \delta_1\ldots\delta_{\ell+1}$ and $\sigma :=\vartheta_1\ldots\vartheta_{\ell+1}$.
\end{proof}
At this moment, we know that if a~directive bi-sequence satisfies the assumptions of Lemma~\ref{lem:norm}, it is not normalized. The remaining question is whether the new bi-sequence whose existence is guaranteed by Corollary~\ref{cor:norm2} is normalized (at least from a~certain moment on). A~partial answer to this question is provided in the following lemma.
\begin{lemma} \label{lem:norm2}
Let the directive bi-sequence $\Lambda = (\delta_1\delta_2\cdots, \vartheta_1\vartheta_2\cdots)$ of a~generalized pseudostandard word $\mathbf{u}$ be of the form $\Lambda = (v(ab)^{\omega}, \sigma(R\vartheta)^{\omega})$, where $v \in \{0,1,2\}^*$, $\sigma \in \{E_0,E_1,E_2,R\}^*$ and $|v| = |\sigma|$, $\vartheta \in \{E_0,E_1,E_2\}$ and $a, b \in \{0,1,2\}$ such that $\vartheta(a) = b$. Then for all $n > n_0 = |v|$ the sequence $(w_n)_{n>n_0}$ contains all palindromic, resp., $\vartheta$-palindromic, prefixes of length larger than $|w_{n_0}|$ of the word $\mathbf{u}$ followed by the letter $b$, resp., $a$.
\end{lemma}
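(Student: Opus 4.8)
The plan is to track, for $n>n_0:=|v|$, the two interleaved families of prefixes produced by the tail of $\Lambda$. Since $\vartheta_{n_0+2k+1}=R$ and $\vartheta_{n_0+2k+2}=\vartheta$, the prefix $P_k:=w_{n_0+2k+1}$ is a palindrome obtained as $(Q_{k-1}a)^R$, and $Q_k:=w_{n_0+2k+2}$ is a $\vartheta$-palindrome obtained as $(P_kb)^\vartheta$, where I set $Q_{-1}:=w_{n_0}$. Each $P_k$ is immediately followed in $\mathbf u$ by $b$ (because $P_kb$ is a prefix of $Q_k$) and each $Q_{k-1}$ by $a$ (because $Q_{k-1}a$ is a prefix of $P_k$). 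First I would record the elementary but decisive observation that a pseudopalindromic closure yields the \emph{shortest} pseudopalindrome of its type with the prescribed prefix. Hence no $\vartheta$-palindromic prefix can have length in $(|P_k|,|Q_k|)$ and no palindromic prefix can have length in $(|Q_k|,|P_{k+1}|)$ (also none in $(|w_{n_0}|,|P_0|)$, this being an $R$-step). Consequently a palindrome of length $>|w_{n_0}|$ can escape the list $(P_k)$ only if its length lies in some gap $(|P_k|,|Q_k|)$, and a $\vartheta$-palindrome can escape the list $(Q_k)$ only in a gap $(|Q_{k-1}|,|P_k|)$. The statement thus reduces to two mirror claims: (S1) no palindrome followed by $b$ has length in $(|P_k|,|Q_k|)$, and (S2) no $\vartheta$-palindrome followed by $a$ has length in $(|Q_{k-1}|,|P_k|)$.

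To prove (S1) I would argue by contradiction. Suppose $p$ is a palindrome with $|P_k|<|p|<|Q_k|$ that is followed by $b$. Since the letter of $\mathbf u$ after $P_k$ is $b$, the word $P_kb$ is a prefix of $p$, and as $p$ is a palindrome, $bP_k$ is a suffix of $p$; writing $p=P_kbv_0$ this forces $R(v_0)\,b\,P_k=P_k\,b\,v_0$. On the other hand $pb$ is a prefix of the $\vartheta$-palindrome $Q_k=(P_kb)^\vartheta$, so $(pb)^\vartheta$ is a prefix of $Q_k$; but $P_kb$ is a prefix of $pb$, so $(pb)^\vartheta$ is a $\vartheta$-palindrome containing $P_kb$, whence $Q_k=(P_kb)^\vartheta$ is a prefix of $(pb)^\vartheta$. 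Therefore $(pb)^\vartheta=Q_k$, which pins down the position of $p$ relative to the central $\vartheta$-palindromic factor $s$ of $Q_k$, where $P_kb=ts$ and $Q_k=ts\vartheta(t)$. Combining the palindrome relation $R(v_0)\,b\,P_k=P_k\,b\,v_0$ with this positional information, I would show that the existence of $p$ forces either a $\vartheta$-palindromic suffix of $P_kb$ strictly longer than $s$ (contradicting the maximality of $s$ used in forming $Q_k$), or else that the letter of $\mathbf u$ following $p$ must equal $\vartheta(b)=a\neq b$ (contradicting that $p$ is followed by $b$).

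Claim (S2) is handled by the symmetric argument, exchanging the roles of $R$ and $\vartheta$ and of $a$ and $b$: a $\vartheta$-palindrome $q$ missed in $(|Q_{k-1}|,|P_k|)$ and followed by $a$ satisfies $Q_{k-1}a$ as a prefix and yields $(qa)^R=P_k=(Q_{k-1}a)^R$, which together with $\vartheta(q)=q$ gives, after the same suffix and period bookkeeping, a contradiction. The base case $k=0$ of (S2) is genuinely special, since $Q_{-1}=w_{n_0}$ need not itself be a $\vartheta$-palindrome (so $q$ ends in $b\,\vartheta(w_{n_0})$ rather than $b\,Q_{-1}$) and must be treated directly, as must the degenerate situation $a=b$, in which $a$ is the letter fixed by $\vartheta$, the tail of $\Delta$ is $a^\omega$, and the two pseudopalindromicity types coincide.

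I expect the combinatorial core of (S1)–(S2) to be the main obstacle: one must control precisely how a hypothetical skipped palindrome sits inside $(P_kb)^\vartheta$. The delicate subcase is $s=\varepsilon$, which occurs exactly when $a\neq b$ (so that $P_kb$ has no nonempty $\vartheta$-palindromic suffix); then $Q_k=P_k\,b\,a\,\vartheta(P_k)$ is long and $p=P_kbv_0$ with $v_0$ a suffix of $P_k$ may a priori be large, and it is here that the relation $R(v_0)\,b\,P_k=P_k\,b\,v_0$ must be exploited to force the follower of $p$ to be $a$ rather than $b$. Intuitively this reflects the phenomenon already visible in Lemma~\ref{lem:norm}, namely that a pseudopalindrome is skipped only at a pattern $(\cdot,\vartheta R R)$ or $(\cdot,R\vartheta\vartheta)$ exhibiting two equal consecutive antimorphisms, whereas the alternating tail $(R\vartheta)^\omega$ never realises such a pattern.
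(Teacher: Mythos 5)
Your reduction is sound and coincides with the structure of the paper's own proof: a skipped prefix of the \emph{same} pseudopalindromic type as the closure being performed is excluded outright by minimality of the closure, so everything rests on the two cross-type claims (S1) and (S2). But that is exactly where your proposal stops being a proof. The sentence ``Combining the palindrome relation \dots{} I would show that the existence of $p$ forces either a $\vartheta$-palindromic suffix of $P_kb$ strictly longer than $s$, or else that the letter following $p$ equals $a$'' announces the needed dichotomy without deriving it; you then yourself flag the subcases $s=\varepsilon$, $a=b$ and $k=0$ as delicate and leave them open. This is the combinatorial core of the lemma, and it is missing. For comparison, the paper settles the cross-type case (in its notation: a skipped $\vartheta$-palindrome $w$ followed by $a$, lying between $w_n=\vartheta(w_n)$ and $w_{n+1}=(w_na)^R$ --- your (S2) up to exchanging $R\leftrightarrow\vartheta$) in a few lines by a different comparison: since $w_n$ and $w$ are both $\vartheta$-palindromes, $w_n$ is a suffix of $w$; writing $asa$ and $a\widetilde{s}a$ for the longest palindromic suffixes of $w_na$ and of $wa$, either $|\widetilde{s}|>|w_n|$, in which case $\vartheta(\widetilde{s})$ (a palindrome, since $R$ and $\vartheta$ commute, and a prefix of $\mathbf u$, since $\widetilde{s}$ is a suffix of the $\vartheta$-palindrome $w$) is a palindromic prefix strictly between $w_n$ and $w_{n+1}$, contradicting that $w_{n+1}=(w_na)^R$ is the shortest palindrome with prefix $w_na$; or $\widetilde{s}=s$, in which case $|(wa)^R|>|(w_na)^R|=|w_{n+1}|$, while minimality and the fact that $wa$ is a prefix of the palindrome $w_{n+1}$ force $|(wa)^R|\le|w_{n+1}|$. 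Some such suffix comparison (or an equivalent) is indispensable; without it, (S1)/(S2), and hence the lemma, remain unproved.

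There is also a localized error in the part you do write out: to get $(pb)^\vartheta=Q_k$ you twice invoke the principle that the pseudopalindromic closure of a prefix of a $\vartheta$-palindrome is a prefix of that $\vartheta$-palindrome (``$pb$ is a prefix of $Q_k$, so $(pb)^\vartheta$ is a prefix of $Q_k$'', and symmetrically for $Q_k$ inside $(pb)^\vartheta$). That principle is false in general: $01$ is a prefix of the palindrome $0110$, yet $(01)^R=010$ is not a prefix of $0110$. Minimality of the closure only yields the \emph{length} inequalities $|(pb)^\vartheta|\le|Q_k|$ and $|Q_k|=|(P_kb)^\vartheta|\le|(pb)^\vartheta|$; to upgrade equal lengths to equality of words you still need an overlap argument (both are $\vartheta$-palindromes of the same length with common prefix $pb$, and $|Q_k|\le 2|P_kb|<2|pb|$, so each is determined by the prefix $pb$ together with the suffix $\vartheta(pb)$). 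This is repairable, but as written it is wrong, and even once repaired it only brings you to the threshold of the unproved dichotomy. One point in your favour: your observation that the initial gap is genuinely special, because $Q_{-1}=w_{n_0}$ need not be a $\vartheta$-palindrome, identifies a case that the paper's own proof (which starts from $n>n_0$ with $w_n$ already of the appropriate type) silently omits --- but you do not resolve it either, so it too remains part of the gap.
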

\begin{proof}
Assume for contradiction that there exists $n > n_0$ such that between the prefixes $w_n = \vartheta(w_n)$ and $w_{n+1} = R(w_{n+1})$ (the converse case is analogous) another pseudopalindromic prefix $w$ occurs:
\begin{itemize}
\item
 Either $w$ is a~palindrome. This is a~contradiction with the fact that $w_{n+1}$ is the shortest palindrome having $w_n a$ as prefix.
 \item Or $w$ is a~$\vartheta$-palindrome followed by $a$ (consider the shortest such $w$). Denote $asa$ the longest palindromic suffix of $w_n a$ and $a\widetilde{s}a$ the longest palindromic suffix of $wa$. Since $w_n$ is a~suffix of $w$, either $|\widetilde{s}| > |w_n|$ or $|\widetilde{s}| = |s|$. If $|\widetilde{s}| > |w_n|$, then $\vartheta(\widetilde{s})$ is a~palindromic prefix shorter than $w_{n+1}$ and longer than $w_n$, which is a~contradiction. If $|\widetilde{s}| = |s|$, then $|(w_na)^{R}| < |(wa)^{R}|$. Since $w_{n+1}$ is the shortest palindrome having the prefix $w_n a$, it has to satisfy at the same time $w_{n+1}=(w_na)^{R} = (wa)^{R}$, and this is again a~contradiction.
     \end{itemize}
\end{proof}

\section{Periodicity of binary generalized pseudostandard words} \label{sec:per}
Our first new result concerning generalized pseudostandard words is a~necessary and sufficient condition for their periodicity over binary alphabet. We thus consider throughout this section binary infinite words.
\begin{theorem}\label{thm:periodicitybinary}
A binary generalized pseudostandard word ${\mathbf u}(\Delta, \Theta)$, where $\Delta=\delta_1\delta_2\ldots \in \{0,1\}^{\mathbb N}$ and $\Theta=\vartheta_1\vartheta_2\ldots \in \{E,R\}^{\mathbb N}$, is periodic if and only if the directive bi-sequence $(\Delta, \Theta)$ satisfies the following condition:
\begin{equation}\label{eq:podm}
(\exists a \in \{0,1\})(\exists \vartheta \in \{E,R\})(\exists n_0 \in \mathbb N)(\forall n>n_0, n \in \mathbb N)(\delta_{n+1}=a \Leftrightarrow \vartheta_n =\vartheta).
\end{equation}
\end{theorem}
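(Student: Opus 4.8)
The plan is to prove both implications of the equivalence, relying heavily on the normalization machinery from Theorem~\ref{thm:norm}. The condition~(\ref{eq:podm}) says that, from some index $n_0$ on, the letter $\delta_{n+1}$ is completely determined by the antimorphism $\vartheta_n$ according to a fixed rule $\delta_{n+1}=a \Leftrightarrow \vartheta_n=\vartheta$. The content of this condition is that the directive bi-sequence eventually becomes \emph{locally periodic} in a very rigid sense: once we fix which antimorphism appears at step $n$, the next letter is forced. My first move is to understand what such a tail looks like. If the rule is $\delta_{n+1}=a\Leftrightarrow\vartheta_n=\vartheta$, then the tail of $(\Delta,\Theta)$ is governed entirely by the sequence $\Theta$ of antimorphisms: given $\Theta$, the letters are determined. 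So I would parametrize the tail by the binary sequence $\vartheta_n\in\{E,R\}$ and show that only the \emph{periodic} choice of $\Theta$ survives, or more precisely that the condition forces $\mathbf u$ to be eventually periodic regardless of how $\Theta$ behaves.

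\medskip

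\noindent For the \textbf{sufficiency} direction (condition $\Rightarrow$ periodic), I would first normalize the directive bi-sequence using Theorem~\ref{thm:norm}; I expect the normalization to preserve the structure of condition~(\ref{eq:podm}) up to a finite prefix, because the rewriting rules there are local and respect the coupling between letters and antimorphisms. After normalization, I would analyze the recurrence $w_{n+1}=(w_n\delta_{n+1})^{\vartheta_{n+1}}$ directly. The key computation is to show that under the forced coupling, each step $w_n\mapsto w_{n+1}$ either leaves the word essentially unchanged (adding a bounded-length suffix) or extends it by appending a \emph{fixed} block, so that the limit word $\mathbf u$ is a periodic concatenation of a fixed finite block. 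Concretely, I would identify a length $N$ and a block $p$ of length dividing the eventual period such that for all large $n$, $w_{n+1}=w_n p'$ for some suffix $p'$ that is always a rotation of the same primitive word. The periodicity then follows because the $\vartheta$-palindromic closure of $wa$ for a word $w$ that is already a long $\vartheta$-palindrome with a periodic structure just extends the period by one block.

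\medskip

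\noindent For the \textbf{necessity} direction (periodic $\Rightarrow$ condition), I would argue by contrapositive: if condition~(\ref{eq:podm}) fails, then for every choice of $a$ and $\vartheta$ and every threshold $n_0$, there are infinitely many indices $n$ where the coupling $\delta_{n+1}=a\Leftrightarrow\vartheta_n=\vartheta$ is violated. The goal is to produce, for every length $m$, a left special factor of $\mathbf u$ of length at least $m$, which by the complexity results from~\cite{MoHe} recalled in Section~\ref{sec:CoW} forces aperiodicity. The mechanism I expect is that each violation of the coupling creates a ``new'' longest pseudopalindromic prefix whose two possible left extensions differ, i.e.\ it injects a bifurcation into the factor structure. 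I would track how a violation at index $n$ produces a pseudopalindromic prefix $w_{n+1}$ whose extension behavior is genuinely different from the periodic template, and show this forces the complexity $\mathcal C_{\mathbf u}(m)$ to exceed any periodic bound. The normalization is again useful here: after normalizing, the $(w_n)$ capture \emph{all} pseudopalindromic prefixes, so the bifurcations cannot be hidden.

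\medskip

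\noindent \textbf{The main obstacle} I anticipate is the bookkeeping in the sufficiency direction: after normalization the coupling condition may hold only up to a finite modification, and I must carefully verify that the pseudopalindromic closure operation, alternating between $R$ and $E$ in a not-necessarily-periodic pattern of $\Theta$, still yields a word with a single eventual period. The subtle point is that condition~(\ref{eq:podm}) constrains letters in terms of antimorphisms but allows $\Theta$ itself to be arbitrary; so I must show that the \emph{word} is periodic even when $\Theta$ is aperiodic. I expect the resolution to be that the forced coupling makes each closure step append the \emph{same} block up to the action of $R$ or $E$, and since $R$ and $E$ both fix the relevant periodic block (or map it to another period), the limit is periodic regardless of the order in which $R$ and $E$ appear. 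Making this invariance precise — identifying the block $p$ and proving $(w_np)^{\vartheta_{n+1}}=w_np$ for all large $n$ and both antimorphisms — is the technical heart of the argument.
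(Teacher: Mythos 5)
Your skeleton matches the paper's at a high level (normalize, then induct on prefixes for sufficiency; contrapositive via left special factors for necessity), and your necessity sketch is in the right spirit: the paper makes your ``bifurcation'' idea concrete by taking an arbitrary prefix $v$, extracting from the negation of~\eqref{eq:podm} two violation indices $n_1,n_2>|v|$, and using that $w_{n_1},w_{n_2}$ are pseudopalindromes together with closure of the language under $E$ and $R$ to conclude that both $0v$ and $1v$ are factors --- note that this needs \emph{two} violation instances (for $a=0$ and $a=1$), not one, and needs no normalization at all. The genuine gap is in your sufficiency direction, exactly at the place you call the technical heart. You treat ``the word must be periodic even when $\Theta$ is aperiodic'' as the main obstacle and propose to resolve it by proving that the closure step appends the same block for \emph{both} choices of $\vartheta_{n+1}$. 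The paper's key step --- Lemma~\ref{lemma:tvar}, which you are missing --- shows that this situation never arises: if a \emph{normalized} bi-sequence satisfies~\eqref{eq:podm} and both $E$ and $R$ occur infinitely often in $\Theta$, then its tail is forced to be strictly alternating, $\Delta=\nu b a^i(a\bar a)^\omega$ and $\Theta=\sigma\vartheta^{i+1}(\overline{\vartheta}\vartheta)^\omega$. The reason is Theorem~\ref{thm:norm}: under the coupling, any pattern $\vartheta\overline{\vartheta}\,\overline{\vartheta}$ in $\Theta$ produces exactly a forbidden factor $(ab\bar b,\vartheta\overline{\vartheta}\,\overline{\vartheta})$, so a normalized sequence cannot contain one; hence beyond the last block of repeated antimorphisms the tail must alternate. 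This rigidity is what makes the induction tractable: knowing the antimorphism used at every step, one can identify the longest pseudopalindromic suffix of $w_n\delta_{n+1}$ explicitly and obtain closed forms for all $w_n$, hence an explicit period.

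Your proposed alternative is moreover internally inconsistent. The induction you sketch relies on normalization (you need to know that no pseudopalindromic prefixes are skipped, e.g.\ that the longest $\overline{\vartheta}$-palindromic prefix of $w_n$ is $w_{n-1}$), but by the observation above a coupled bi-sequence whose tail of $\Theta$ is not alternating (and not eventually constant) is \emph{never} normalized. So either you work with the normalized sequence, in which case the ``arbitrary $\Theta$'' case is vacuous and your invariance claim concerns steps that never occur, or you work with the original sequence, in which case the structural facts your induction needs are unavailable. Finally, you do not address the case where $\Theta$ is eventually constant (by~\eqref{eq:podm} then $\Delta$ is eventually constant too); the paper treats it separately, as an $E$- or $R$-standard word with seed whose periodicity is known from~\cite{BuLuZa}, since there the alternating structure and the closure of the language under both antimorphisms are absent.
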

\begin{remark}
The condition for periodicity may be rewritten in a slightly less formal way:\\
${\mathbf u}(\Delta, \Theta)$ is periodic if and only if there exists a~bijection $\pi: \{E,R\} \to \{0,1\}$ such that $\pi(\vartheta_n)=\delta_{n+1}$ for all sufficiently large $n$.
\end{remark}

Let us point out that generalized pseudostandard words are either aperiodic or purely periodic -- it follows from the fact that they are recurrent.

In order to prove Theorem~\ref{thm:periodicitybinary} we need the following lemma and remark.
\begin{lemma} \label{lemma:tvar}
Let $(\Delta,\Theta)$ be a normalized directive bi-sequence of a generalized pseudostandard word. Assume $(\Delta,\Theta)$ satisfies condition~\eqref{eq:podm} and both $E$ and $R$ occur in $\Theta$ infinitely many times. Then there exist
$$\nu \in {\{0,1\}}^*,\ \sigma \in {\{E,R\}}^*,\ a, b \in \{0,1\}, \ \vartheta \in \{E,R\}, \ i \in \mathbb N$$ such that
$$\Delta=\nu b a^i (a\bar{a})^\omega \quad \text{and} \quad \Theta=\sigma \vartheta^{i+1}(\overline{\vartheta}\vartheta)^\omega \quad \text{and $|\nu|=|\sigma|$.}$$

\end{lemma}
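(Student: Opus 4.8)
The plan is to read the hypothesis \eqref{eq:podm} as a dictionary that, for every index $n>n_0$, translates the antimorphism $\vartheta_n$ into the next letter $\delta_{n+1}$, and then to feed this dictionary into the normalization hypothesis to force $\Theta$ to be eventually alternating. The heart of the argument is that in a normalized sequence a ``switch'' in $\Theta$ can never be immediately followed by a ``repeat''. To prove this, suppose that for some $n>n_0$ one had $\vartheta_n\vartheta_{n+1}\vartheta_{n+2}=\vartheta'\,\overline{\vartheta'}\,\overline{\vartheta'}$. Since $\vartheta_n\ne\vartheta_{n+1}$ and there are only two antimorphisms, exactly one of $\vartheta_n,\vartheta_{n+1}$ equals the distinguished $\vartheta$ of \eqref{eq:podm}; by the dictionary this forces exactly one of $\delta_{n+1},\delta_{n+2}$ to be $a$ and the other to be $\bar a$, so $\delta_n\delta_{n+1}\delta_{n+2}$ has the shape $a'b\bar b$. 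But then $(\delta_n\delta_{n+1}\delta_{n+2},\vartheta_n\vartheta_{n+1}\vartheta_{n+2})$ is a factor of the form $(ab\bar b,\vartheta\overline{\vartheta}\,\overline{\vartheta})$, which Theorem~\ref{thm:norm} rules out of a normalized bi-sequence (the last normalization rule would rewrite it, witnessing a skipped pseudopalindromic prefix). This contradiction gives
$$\vartheta_n\ne\vartheta_{n+1}\ \Longrightarrow\ \vartheta_{n+1}\ne\vartheta_{n+2}\qquad(n>n_0).$$

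Next I would exploit that both $E$ and $R$ occur infinitely often, so $\Theta$ is not eventually constant and there is an index $>n_0$ at which a switch occurs; let $q$ be the least such index. The implication above shows that switches propagate, hence $\vartheta_n\ne\vartheta_{n+1}$ for all $n\ge q$, i.e.\ $\Theta$ alternates from position $q$ onward. By minimality of $q$ we also have $\vartheta_{n_0+1}=\vartheta_{n_0+2}=\cdots=\vartheta_q$; call this common value $\vartheta$. Putting $\sigma=\vartheta_1\cdots\vartheta_{n_0}$ and $i=q-n_0-1\ge 0$, the alternating tail must begin with $\overline{\vartheta}$ (because the step $q\to q+1$ is a switch), so $\Theta=\sigma\,\vartheta^{\,i+1}(\overline{\vartheta}\vartheta)^\omega$, with $|\sigma|=n_0$.

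Finally I would read off $\Delta$ from the dictionary. Set $\nu=\delta_1\cdots\delta_{n_0}$ (so $|\nu|=|\sigma|$) and $b=\delta_{n_0+1}$, which is unconstrained since $n=n_0$ is excluded from \eqref{eq:podm}. For $n_0<n\le q$ we have $\vartheta_n=\vartheta$, so \eqref{eq:podm} makes $\delta_{n+1}$ equal to the single letter $a$ associated with $\vartheta$; this fills positions $n_0+2,\dots,q+1$ with $i+1$ copies of $a$. From $q+1$ on $\vartheta_n$ alternates, so $\delta_{n+1}$ alternates as well, and the switch at $q$ continues the run of $a$'s by the complementary letter $\bar a$. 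Using $a^{\,i+1}\bar a\, a\,\bar a\cdots=a^{\,i}(a\bar a)^\omega$, this reads exactly $\Delta=\nu\,b\,a^{\,i}(a\bar a)^\omega$, which finishes the proof.

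I expect the single genuine obstacle to be the ``no switch-then-repeat'' step. It requires both the correct translation of the antimorphism triple into a letter triple via \eqref{eq:podm} so as to recognize the forbidden normalization pattern, and careful index bookkeeping to guarantee that all indices involved lie in the range $n>n_0$ where the dictionary is valid and where Theorem~\ref{thm:norm} applies; once this implication is in hand, the description of $\Theta$ and the subsequent reading-off of $\Delta$ are routine.
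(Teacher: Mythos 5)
Your proof is correct and follows essentially the same route as the paper's: both hinge on the biconditional in~\eqref{eq:podm} translating antimorphisms into letters, the fact (from Theorem~\ref{thm:norm}) that a normalized bi-sequence cannot contain a factor $(ab\bar{b},\vartheta\overline{\vartheta}\,\overline{\vartheta})$, and the infinitude of both $E$ and $R$ to locate the first switch. The only difference is presentational --- you isolate the ``switch cannot be followed by a repeat'' implication and let it propagate, while the paper runs an explicit induction on the prefixes $\Lambda_k$; these are the same argument.
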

\begin{proof}
Let us set $\nu= \delta_1 \ldots \delta_{n_0}$ and $\sigma= \vartheta_1 \ldots \vartheta_{n_0}$. Let us further denote $b=\delta_{n_0+1}$ and $\vartheta=\vartheta_{n_0+1}$. Since the directive bi-sequence satisfies condition~\eqref{eq:podm}, the same letter (say $a$) has to follow $\vartheta$.
Since both $E$ and $R$ occur in~$\Theta$ infinitely many times, $\vartheta$ is repeated only finitely many times (say $i+1$ times), i.e., $\vartheta_{n_0+1}=\ldots=\vartheta_{n_0+1+i}=\vartheta$ and $\vartheta_{n_0+2+i}=\overline{\vartheta}$. According to~\eqref{eq:podm} we have $\delta_{n_0+2}=\ldots=\delta_{n_0+2+i}=a$ and $\delta_{n_0+3+i}=\bar{a}$. By Theorem~\ref{thm:norm} a~normalized directive bi-sequence cannot contain the factor $(cd\bar{d},\gamma\bar{\gamma}\bar{\gamma})$ for any $c, d \in \{0,1\}$, $\gamma \in \{E,R\}$. Consequently, $\vartheta_{n_0+3+i}=\vartheta$. Consider now the prefix of $(\Delta, \Theta)$ of the form $\Lambda_k=(\nu b a^i (a\bar{a})^k, \sigma \vartheta^{i+1}(\overline{\vartheta}\vartheta)^k)$. Then again by Theorem~\ref{thm:norm} and using~\eqref{eq:podm}, the prefix of $(\Delta, \Theta)$ of length $|\Lambda_k|+2$ is equal to $\Lambda_{k+1}$.
\end{proof}

The following remark follows easily from Theorem~\ref{thm:norm}.
\begin{remark} \label{rem:norm}
Let a~directive bi-sequence of a generalized pseudostandard word satisfy condition~\eqref{eq:podm}. Then the corresponding normalized bi-sequence satisfies condition~\eqref{eq:podm}.
\end{remark}
%

\begin{proof}[Proof of Theorem~\ref{thm:periodicitybinary}]
\noindent $(\Leftarrow):$
\begin{enumerate}
\item Assume that the sequence $\Theta$ contains both $E$ and $R$ infinitely many times. Let us normalize $\Lambda=(\Delta, \Theta)$ and denote the new directive bi-sequence by $\widetilde{\Lambda}$. By Remark~\ref{rem:norm} the sequence $\widetilde{\Lambda}$ satisfies condition~\eqref{eq:podm}. Applying Lemma~\ref{lemma:tvar} it is possible to write $\widetilde{\Lambda}=(\widetilde{\nu}(a\bar{a})^\omega, \widetilde{\sigma} (\overline{\vartheta}\vartheta)^\omega)$, where $|\widetilde{\nu}|=|\widetilde{\sigma}|$. Without loss of generality suppose that $\widetilde{\sigma}=\widetilde{\theta_1}\vartheta$. (Otherwise we would extend the sequence $\widetilde{\nu}$ and $\widetilde{\sigma}$ by two consecutive members.) Set $n_0 = |\widetilde{\sigma}|$. We will show that for all $n > n_0$ there exists $k \in \mathbb{N}$ such that
either
\begin{equation}\label{eq:tvar1}
w_n = w_{n_0}[(w_{n_0}^{-1}w_{n_0+1})\vartheta\overline{\vartheta}(w_{n_0}^{-1}w_{n_0+1})]^k,
\end{equation}
\begin{center}
or
\end{center}
\begin{equation}\label{eq:tvar2}
w_n = w_{n_0}[(w_{n_0}^{-1}w_{n_0+1})\vartheta\overline{\vartheta}(w_{n_0}^{-1}w_{n_0+1})]^k(w_{n_0}^{-1}w_{n_0+1}),
\end{equation}
where $(w_n)_{n\geq 0}$ is the sequence of prefixes associated with ${\mathbf u}(\widetilde \Lambda)$ (we omit tildes for simplicity).
It follows then directly from these forms that
\begin{equation}\label{eq:period}
w_{n_0+1}\vartheta\overline{\vartheta}(w_{n_0}^{-1}w_{n_0+1})w_{n_0}^{-1}
\end{equation}
is the period of the generalized pseudostandard word ${\mathbf u}(\Delta, \Theta)$.

\vspace{0.2cm}
It is not difficult to show that if $w_n$ is of the form~\eqref{eq:tvar1}, then $w_n = \vartheta(w_n)$. It suffices to take into account that $\vartheta_{n_0}=\vartheta$ and therefore $\vartheta(w_{n_0})=w_{n_0}$ and $\overline{\vartheta}(w_{n_0+1})=w_{n_0+1}$. Similarly, if $w_n$ is of the form~\eqref{eq:tvar2}, then $w_n = \overline{\vartheta}(w_n)$.

\vspace{0.2cm}
Let us proceed by induction: $w_{n_0}$ and $w_{n_0+1}$ are of the form~\eqref{eq:tvar1} or~\eqref{eq:tvar2} -- it suffices to set $k=0$. Let $n>n_0+1$ and assume $w_\ell$ is of the form~\eqref{eq:tvar1} or~\eqref{eq:tvar2} for all $\ell \in \mathbb{N}$, where $n_0+1<\ell \leq n$.

\begin{itemize}
\item Let $w_n$ be of the form~\eqref{eq:tvar1}. Then $w_n=\vartheta(w_n)$ and by condition~\eqref{eq:podm}, we have $\delta_{n+1}=a$. When constructing $w_{n+1}$, we search for the longest $\overline{\vartheta}$-palindromic suffix of $w_n a$. Since $\widetilde{\Lambda}$ is normalized, the longest $\overline{\vartheta}$-palindromic prefix of $w_n$ is $w_{n-1}$, and consequently the longest $\overline{\vartheta}$-palindromic suffix of $w_n$ is $\vartheta(w_{n-1})$. Thanks to the form of $\widetilde{\Lambda}=(\widetilde{\nu}(a\bar{a})^\omega, \widetilde{\sigma} (\overline{\vartheta}\vartheta)^\omega)$ we further know that $w_{n-1}$ is followed by $\bar{a}$. Since $w_n$ is a~$\vartheta$-palindrome, the factor $\vartheta(w_{n-1})$ is preceded by $\vartheta(\bar{a})$. Consequently, $\vartheta(\bar{a})\vartheta(w_{n-1})a$ is a~candidate for the longest $\overline{\vartheta}$-palindromic suffix of $w_n a$. On the one hand, if $\vartheta = R$, this candidate equals $\bar{a}R(w_{n-1})a$, which is an $E$-palindrome. On the other hand, if $\vartheta = E$, then this candidate equals $aE(w_{n-1})a$, which is an $R$-palindrome. Thus it is indeed the longest $\overline{\vartheta}$-palindromic suffix of $w_n a$. Using the induction assumption and since $w_{n-1}$ is a~$\overline{\vartheta}$-palindrome, we have:
    $$w_n = w_{n_0}[(w_{n_0}^{-1}w_{n_0+1})\vartheta\overline{\vartheta}(w_{n_0}^{-1}w_{n_0+1})]^k,$$
$$w_{n-1} = w_{n_0}[(w_{n_0}^{-1}w_{n_0+1})\vartheta\overline{\vartheta}(w_{n_0}^{-1}w_{n_0+1})]^{k-1}(w_{n_0}^{-1}w_{n_0+1}).$$
Consequently, we obtain:
 $$w_{n+1}=(w_n a)^{\overline{\vartheta}}=(w_{n_0+1}\overline{\vartheta}(w_{n_0}^{-1})\vartheta(w_{n-1})a)^{\overline{\vartheta}}=w_n(w_{n_0}^{-1}w_{n_0+1}),$$ which corresponds to the form~\eqref{eq:tvar2}.
\item For $w_n$ of the form~\eqref{eq:tvar2} we proceed analogously.
\end{itemize}
\item Let the directive bi-sequence be of the form $\Lambda=(\nu a^{\omega}, \sigma \vartheta^{\omega})$. (In fact, the generalized pseudostandard word in question is either an $E$-standard or an $R$-standard word with seed as defined in~\cite{BuLuZa}.)
It is known in this case that the word is periodic~\cite{BuLuZa}. Let us rewrite the directive bi-sequence so that $|\nu|=|\sigma|$ and $\sigma=\theta_1 \vartheta$ and let $n_0=|\sigma|$. It can be proven similarly as in the first case that for all $n > n_0$ there exists $k\in \mathbb{N}$ such that
\begin{equation} \label{eq:tvar3}
w_n=w_{n_0}[w_{n_0}^{-1}w_{n_0+1}]^k.
\end{equation}
Therefore the period of the $E$- or $R$-standard word with seed in question is equal to $w_{n_0+1}w_{n_0}^{-1}$.
\end{enumerate}

\noindent $(\Rightarrow):$
We will show that if condition~\eqref{eq:podm} is not satisfied, then the generalized pseudostandard word ${\mathbf u}(\Delta, \Theta)$ is aperiodic. More precisely, we will show that each of its prefixes is a~left special factor. Let us restrict ourselves to the case where $\Theta$ contains $E$ and $R$ infinitely many times. Otherwise, we deal with $E$- or $R$-standard words with seed and the result is known from~\cite{BuLuZa}. The negation of condition~\eqref{eq:podm} reads: for all $a \in \{0,1\}$, for all $\vartheta \in \{E,R\}$, and for all $n_0 \in \mathbb{N}$ there exists $n>n_0$ such that
\begin{equation}
(\delta_{n+1}=a \wedge \vartheta_n = \overline{\vartheta}) \vee (\delta_{n+1}=\bar{a} \wedge \vartheta_n = \vartheta).
\end{equation}
Let $v$ be a~prefix of ${\mathbf u}(\Delta, \Theta)$. Firstly, take $a=0$, $\vartheta=R$, and $n_0 > |v|$, then there exists $n_1>n_0$ such that $(\delta_{n_1+1}=0 \wedge \vartheta_{n_1} = E)\vee (\delta_{n_1+1}=1 \wedge \vartheta_{n_1} = R)$. Secondly, choose $a=1$, $\vartheta=R$, and $n_0 > |v|$, then there exists $n_2>n_0$ such that $(\delta_{n_2+1}=1 \wedge \vartheta_{n_2} = E)\vee (\delta_{n_2+1}=0 \wedge \vartheta_{n_2} = R)$. The following four cases may occur:

\begin{itemize}
\item $\delta_{n_1+1}=0$, $\vartheta_{n_1} = E$ and $\delta_{n_2+1}=1$, $\vartheta_{n_2} = E$:
    In this case, both $w_{n_1}$ and $w_{n_2}$ are $E$-palindromes, thus $E(v)$ is a~suffix of both of them. Since $\delta_{n_1+1}=0$ and $\delta_{n_2+1}=1$, the words $E(v)0$ and $E(v)1$ are factors of ${\mathbf u}(\Delta, \Theta)$. Since the language is closed under $E$, both $1v$ and $0v$ are factors of ${\mathbf u}(\Delta, \Theta)$.
\item $\delta_{n_1+1}=0$, $\vartheta_{n_1} = E$ and $\delta_{n_2+1}=0$, $\vartheta_{n_2} = R$:
    Now, $E(v)$ has the right extension $E(v)0$ and $R(v)$ has the right extension $R(v)0$. Using the fact that the language is closed under $E$ and $R$, one can see that both $1v$ and $0v$ are factors of ${\mathbf u}(\Delta, \Theta)$.
\item $\delta_{n_1+1}=1$, $\vartheta_{n_1} = R$ and $\delta_{n_2+1}=1$, $\vartheta_{n_2} = E$:
    This case is analogous to the previous one.
\item $\delta_{n_1+1}=1$, $\vartheta_{n_1} = R$ and $\delta_{n_2+1}=0$, $\vartheta_{n_2} = R$:
    This case is similar to the first one.
\end{itemize}
\end{proof}

\begin{example}
Consider the directive bi-sequence $\Lambda = ((011)^{\omega}, (EER)^{\omega})$ from Example~\ref{ex:norm}. This sequence satisfies condition~\eqref{eq:podm}. According to Remark~\ref{rem:norm} the normalization of the directive bi-sequence preserves condition~\eqref{eq:podm}. It follows from Example~\ref{ex:norm2} that the normalized form of the directive bi-sequence is $\widetilde{\Lambda} = (01(10)^{\omega}, RE(RE)^{\omega})$. Let us write down the first prefixes $\widetilde{w}_k$ of ${\mathbf u}(\widetilde{\Lambda})$:
\begin{align}
\widetilde{w}_1 =& \;0 \nonumber \\
\widetilde{w}_2 =& \;01 \nonumber \\
\widetilde{w}_3 =& \;0110 \nonumber \\
\widetilde{w}_4 =& \;011001 \nonumber \\
\widetilde{w}_5 =& \;01100110 \nonumber \\
\widetilde{w}_6 =& \;0110011001. \nonumber
\end{align}
In the proof of Theorem~\ref{thm:periodicitybinary}, the formula for the period (not necessarily the smallest one) of ${\mathbf u}(\Lambda)$ was given by~\eqref{eq:period}: $$w_{n_0+1}\vartheta\overline{\vartheta}(w_{n_0}^{-1}w_{n_0+1})w_{n_0}^{-1},$$
where $\vartheta=E$, $n_0=2$, $w_{n_0} = \widetilde{w}_2$, and $w_{n_0+1} = \widetilde{w}_3$. Thus the period equals $0110 = \widetilde{w}_3$. Therefore $\mathbf{u}(\Lambda)={\mathbf u}(\widetilde{\Lambda})= (0110)^{\omega}$.
\end{example}

\section{Periodicity of ternary generalized pseudostandard words}\label{sec:perTernary}
For ternary generalized pseudostandard words, straightforward analogy of the binary case, i.e., of condition~\eqref{eq:podm} from Theorem~\ref{thm:periodicitybinary}, does not work.
\begin{example} Consider the ternary infinite word $\mathbf u=\mathbf{u}((01)^{\omega}, (RE_1)^{\omega})$. It is easy to show that any prefix $p$ of $\mathbf u$ is left special -- both $1p$ and $2p$ are factors of $\mathbf u$, thus $\mathbf u$ is an aperiodic word.
\end{example}
The condition for periodicity gets more complicated.
\begin{theorem}\label{thm:ternary}
Let $\mathbf u=\mathbf u(\Delta, \Theta)$ be a~ternary generalized pseudostandard word over $\{0,1,2\}$. Then $\mathbf u$ is periodic if and only if one of the following conditions is met:
\begin{enumerate}
\item The sequences $\Delta$ and $\Theta$ are eventually constant, i.e., $\Delta=va^{\omega}$ for some $v \in \{0,1,2\}^*$ and $a \in \{0,1,2\}$ and $\Theta=\sigma \vartheta^{\omega}$ for some $\sigma \in \{E_0, E_1, E_2, R\}^*$ and $\vartheta \in \{E_0, E_1, E_2, R\}$.
\item \begin{itemize}
\item $\Theta$ contains exactly two antimorphisms $\vartheta$ and $R$ infinitely many times;
\item $\Delta$ contains two (not necessarily distinct) letters $a$ and $b$ infinitely many times such that $\vartheta(a)=b$;
\item there exists $n_0 \in \mathbb N$ such that for every $n>n_0$ we have either
$$\vartheta_n=\vartheta \Rightarrow \delta_{n+1}=a \ \wedge \ \vartheta_n=R \Rightarrow \delta_{n+1}=b,$$
or
$$\vartheta_n=\vartheta \Rightarrow \delta_{n+1}=b \ \wedge \ \vartheta_n=R \Rightarrow \delta_{n+1}=a.$$
\end{itemize}
\item The normalized directive bi-sequence $(\widetilde{\Delta}, \widetilde{\Theta})$ of $\mathbf u$ satisfies $$(\widetilde{\Delta}, \widetilde{\Theta})=(v(ijk)^{\omega}, \sigma(E_k E_j E_i)^{\omega}),$$ where $v \in \{0,1,2\}^*, \ \sigma \in \{E_0, E_1, E_2, R\}^*, \ |v|=|\sigma|$, and $i,j,k \in \{0,1,2\}$ are mutually different letters.
\end{enumerate}
\end{theorem}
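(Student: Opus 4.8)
The plan is to prove both implications, reading the three periodicity conditions as an exhaustive list of the shapes a periodic directive bi-sequence can take. For the sufficiency direction ($\Leftarrow$) I would dispatch the three cases separately. Condition~1 places us, from some index on, in the setting of an $E_i$- or $R$-standard word with seed, whose periodicity is known from~\cite{BuLuZa}; as in the second part of the proof of Theorem~\ref{thm:periodicitybinary}, one only rewrites the bi-sequence so that $|v|=|\sigma|$, proves by induction the self-similar form~\eqref{eq:tvar3}, and reads off the period $w_{n_0+1}w_{n_0}^{-1}$. Condition~2 is exactly the hypothesis of Corollary~\ref{cor:norm2}, so I would first apply it to bring the bi-sequence into the form $(v(ab)^\omega,\sigma(R\vartheta)^\omega)$, then use Lemma~\ref{lem:norm2} to ensure this form is normalized beyond $|v|$, and finally run the induction from the proof of Theorem~\ref{thm:periodicitybinary} essentially verbatim: each prefix $w_n$ with $n>n_0$ has one of two self-similar forms analogous to~\eqref{eq:tvar1} and~\eqref{eq:tvar2}, alternating between a $\vartheta$-palindrome and an $R$-palindrome, and a period is then read off just as in~\eqref{eq:period}.

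Condition~3 is the genuinely new case and I would treat it directly. Starting from the normalized bi-sequence $(\widetilde\Delta,\widetilde\Theta)=(v(ijk)^\omega,\sigma(E_kE_jE_i)^\omega)$, I would set $n_0=|v|$ and prove by induction that the prefixes grow in a period-three pattern, namely $w_{n+3}=w_n t$ for a fixed word $t$ independent of $n$, so that $\mathbf u=w_{n_0}t^\omega$ is periodic with period $t=w_{n_0+3}w_{n_0}^{-1}$. The induction amounts to tracking, at each of the three steps of the cycle, the longest pseudopalindromic suffix produced by the closure, using that $E_\ell$ fixes $\ell$ and exchanges the other two letters, together with the commutation relations among $E_0,E_1,E_2$. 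As a sanity check, $((012)^\omega,(E_2E_1E_0)^\omega)$ yields $w_1=01$, $w_2=0112$, $w_3=011220$ and $w_4=01122001$, giving the period $011220$.

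For the necessity direction ($\Rightarrow$) I would prove the contrapositive and establish aperiodicity uniformly: assuming that none of Conditions~1--3 holds, I would show that every prefix $v$ of $\mathbf u$ is left special, so the complexity is strictly increasing and~\cite{MoHe} yields aperiodicity. The discussion is organised by the number $m$ of antimorphisms occurring infinitely often in $\Theta$. If $m=1$ then $\Theta$ is eventually constant and the failure of Condition~1 means $\Delta$ is not eventually constant; if $m=2$ with $R$ present, the failure of Condition~2 means the letter--antimorphism consistency breaks infinitely often; the remaining subcases are $m=2$ with two exchange antimorphisms, $m=3$ outside the normalized shape of Condition~3, and $m=4$. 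In every subcase the negated hypothesis provides, for the prefix $v$, two indices $n_1,n_2$ at which pseudopalindromic prefixes are followed by letters that, after applying an antimorphism under which $\mathcal L(\mathbf u)$ is closed, produce right extensions $\vartheta(v)x$ and $\vartheta(v)y$ with $x\neq y$; closure then converts these into two distinct left extensions of $v$. This is precisely the mechanism of the $(\Rightarrow)$ part of Theorem~\ref{thm:periodicitybinary}, now carried out over three letters and four antimorphisms.

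The hardest part, I expect, is the necessity analysis when several antimorphisms recur, in particular certifying that among three-antimorphism bi-sequences only the normalized shape of Condition~3 can be periodic and that every other arrangement forces a left-special prefix. This requires a disciplined enumeration of the local (letter, antimorphism) patterns admissible in a normalized bi-sequence, and it is here that the partial normalization results of Section~\ref{sec:norm ternary} do the essential work, collapsing the many superficially different bi-sequences onto the canonical forms of Conditions~2 and~3. A secondary difficulty is the bookkeeping in the sufficiency of Condition~3, where the period-three stabilisation $w_{n+3}=w_n t$ must be verified using the specific action of the three exchange antimorphisms and their commutation.
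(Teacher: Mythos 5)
Your overall skeleton matches the paper's: the proof is organised by the number of antimorphisms occurring infinitely often in $\Theta$, sufficiency of Condition~1 is the paper's Lemma~\ref{1antimorphism}, sufficiency of Condition~2 goes exactly as you say through Corollary~\ref{cor:norm2} and Lemma~\ref{lem:norm2} (Proposition~\ref{th:per}), and sufficiency of Condition~3 is your period-three induction (the increments cycle through $E_1(p)$, $E_2(p)$, $E_0(p)$). The genuine gap is in your necessity direction. You claim that in \emph{every} subcase the negated hypothesis yields, for any prefix $v$, two pseudopalindromic prefixes whose following letters pull back under the antimorphisms to two \emph{distinct} left extensions of $v$, ``precisely the mechanism'' of Theorem~\ref{thm:periodicitybinary}. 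That mechanism fails in exactly the subcases you flag as hardest. Concretely, suppose $R, E_0, E_1$ all occur infinitely often in $\Theta$ and the letter--antimorphism consistency holds with $\vartheta_n=R \Rightarrow \delta_{n+1}=0$, $\vartheta_n=E_0 \Rightarrow \delta_{n+1}=0$, $\vartheta_n=E_1 \Rightarrow \delta_{n+1}=2$. Since $R(0)=E_0(0)=E_1(2)=0$, every pseudopalindromic prefix produces the \emph{same} left extension $0v$, so no left special prefix arises this way; yet such words are aperiodic, and this is precisely what the theorem asserts. Proving it is the content of the paper's Proposition~\ref{more_antimorphismR} (and of the $(\Rightarrow)$ half of its final proposition for exchange-only bi-sequences), and it requires an idea absent from your proposal: assume periodicity, identify the pseudopalindromic suffix of $w^{(n)}_{m+1}e$ that normalization forces to be the longest one, observe that the ``wrong'' antimorphism at the next step means the closure used a strictly shorter suffix, extend the forced suffix outwards to a maximal ($E_t$-)palindrome $s^{(n)}$ flanked by letters not matched by the relevant antimorphism, and use closure of $\mathcal{L}(\mathbf u)$ under that antimorphism to conclude $s^{(n)}$ is bispecial. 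The $s^{(n)}$ are arbitrarily long, giving arbitrarily long left special factors and hence aperiodicity; this in turn forces $\Theta$ to be eventually $(RE_\ell)^{\omega}$ (respectively the cyclic exchange pattern), contradicting the presence of three or more antimorphisms.

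A second, related misstep: you assert that the partial normalization results of Section~\ref{sec:norm ternary} ``collapse'' the many bi-sequences onto the canonical forms of Conditions~2 \emph{and}~3. They do nothing for Condition~3: Lemma~\ref{lem:norm}, Corollary~\ref{cor:norm2} and Lemma~\ref{lem:norm2} all require exactly two antimorphisms one of which is $R$. In the exchange-only and three-or-more-antimorphism cases the paper can only invoke the abstract existence of a normalized bi-sequence and must work directly with its defining property (no pseudopalindromic prefix is skipped), together with the identity $E_iE_jE_k=E_j$ of Remark~\ref{poz:e} and Corollary~\ref{cor:ecka} --- which is also what your loose appeal to ``commutation relations'' should be, since the $E_i$ do not commute.
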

It is worth mentioning that for bi-sequences containing more antimorphisms including $R$, Theorem~\ref{thm:ternary} provides an easy-to-check condition for recognizing periodicity, however for other bi-sequences containing more antimorphisms, it is not too practical since the algorithm for normalization is not known over ternary alphabet.

\begin{example}
Consider $\Lambda=(0(211)^{\omega}, (R E_0 E_0)^{\omega})$. Since $E_0(1)=2$, the second condition of Theorem~\ref{thm:ternary} is satisfied. Let us write down the first few prefixes $w_n$ of $\mathbf u$:
\begin{eqnarray}
w_1 & =& 0 \nonumber \\
w_2 & =& 0210 \nonumber \\
w_3 & =& 0210120210 \nonumber \\
w_4 & =& 0210120210120. \nonumber
\end{eqnarray}
It is left for the reader to show that $\mathbf u=(021012)^{\omega}$.
\end{example}
\begin{example}
Consider $\Lambda=((102)^{\omega}, (E_2 E_0 E_1)^{\omega})$. The third condition of Theorem~\ref{thm:ternary} is satisfied. Let us write down the first few prefixes $w_n$ of $\mathbf u$:
\begin{eqnarray}
w_1 & =& 10 \nonumber \\
w_2 & =& 1002 \nonumber \\
w_3 & =& 100221 \nonumber \\
w_4 & =& 10022110 \nonumber \\
w_5 & =& 1002211002. \nonumber \\
\end{eqnarray}
It is not difficult to see that $\mathbf u=(100221)^{\omega}$.
\end{example}
In order to make the proof of Theorem~\ref{thm:ternary} as comprehensible as possible, let us distinguish several cases according to the number of antimorphisms occurring infinitely many times in the directive bi-sequence. We will consider them in the following three sections. Putting then the results of Sections~\ref{sec:1}, \ref{sec:2} and~\ref{sec:3or4} together, one obtains immediately the proof of Theorem~\ref{thm:ternary}.
\subsection{Directive bi-sequences containing one antimorphism}\label{sec:1}
Let us start with the simplest case, where $\Theta$ contains only one antimorphism infinitely many times.
\begin{lemma}\label{1antimorphism}
Let the directive bi-sequence $\Lambda=(\Delta, \Theta)$ be of the form $\Lambda=(\Delta, \sigma \vartheta^{\omega})$ for some $\sigma \in \{E_0, E_1, E_2, R\}^*$ and $\vartheta \in \{E_0, E_1, E_2, R\}$. Then the word ${\mathbf u}={\mathbf u}(\Delta, \Theta)$ is periodic if and only if $\Delta=va^{\omega}$ for some $v \in \{0,1,2\}^*$ and $a \in \{0,1,2\}$.
\end{lemma}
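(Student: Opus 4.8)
The plan is to establish the two implications separately: the ``if'' direction reduces to the theory of pseudostandard words with one fixed antimorphism, while the ``only if'' direction follows from a left special prefix argument.

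For $(\Leftarrow)$, assume $\Delta=va^{\omega}$ and $\Theta=\sigma\vartheta^{\omega}$. First I would enlarge the common prefix of $\Delta$ and $\Theta$ so as to fix an index $n_0$ with $\vartheta_{n_0}=\vartheta$ and with $\vartheta_n=\vartheta$, $\delta_n=a$ for every $n>n_0$; then $w_{n_0}$ is a genuine $\vartheta$-palindrome and, from $w_{n_0}$ onwards, the construction of $\mathbf u$ is exactly that of a $\vartheta$-standard word with seed $w_{n_0}$ and eventually constant directive word $a^{\omega}$, which is periodic by~\cite{BuLuZa}. To keep the proof self-contained one may instead mimic the induction carried out in the proof of Theorem~\ref{thm:periodicitybinary}: setting $p=w_{n_0}^{-1}w_{n_0+1}$, I would prove by induction on $k$ that $w_n=w_{n_0}p^{k}$ and that each such $w_n$ is a $\vartheta$-palindrome, so that $\mathbf u=w_{n_0}p^{\omega}$ has period $w_{n_0+1}w_{n_0}^{-1}$. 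Writing $w_{n_0}a=ts$ with $s$ the longest $\vartheta$-palindromic suffix, one has $p=a\vartheta(t)$, and the inductive step consists in checking that the longest $\vartheta$-palindromic suffix of $w_n a$ again peels off precisely the prefix $t$, so that the increment $w_n^{-1}w_{n+1}=a\vartheta(t)=p$ never changes. Since generalized pseudostandard words are recurrent, this eventual periodicity is in fact pure periodicity.

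For $(\Rightarrow)$ I would prove the contrapositive: if $\Delta$ is not of the form $va^{\omega}$, then $\mathbf u$ is aperiodic, and in fact every prefix of $\mathbf u$ is left special. As $\Delta$ is not eventually constant, two distinct letters $c\neq d$ occur in $\Delta$ infinitely often, and since $\vartheta$ occurs in $\Theta$ infinitely often, $\mathcal L(\mathbf u)$ is closed under $\vartheta$. Fix a prefix $v$ of $\mathbf u$ and pick large indices $n,m$ (beyond the stabilization of $\Theta$, so that $\vartheta_n=\vartheta_m=\vartheta$) with $\delta_{n+1}=c$, $\delta_{m+1}=d$ and $|w_n|,|w_m|>|v|$. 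Then $w_n$ and $w_m$ are $\vartheta$-palindromes having $v$ as a prefix, hence $\vartheta(v)$ as a suffix, and since each is immediately followed by the next directive letter, $\vartheta(v)c$ and $\vartheta(v)d$ are factors of $\mathbf u$. Applying $\vartheta$ and using closure, $\vartheta(c)v$ and $\vartheta(d)v$ are factors too; as $\vartheta$ is a bijection on letters, $\vartheta(c)\neq\vartheta(d)$, so $v$ is left special. Every prefix being left special makes the factor complexity strictly increasing, so $\mathbf u$ is aperiodic.

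I expect the main obstacle to lie in the inductive step of the self-contained version of $(\Leftarrow)$, namely in showing that appending the fixed letter $a$ to the $\vartheta$-palindrome $w_n$ always cuts off the same prefix $t$ as its longest $\vartheta$-palindromic suffix, i.e.\ that no longer $\vartheta$-palindromic suffix can appear. This is precisely where the eventual constancy of $\Delta$ (together with the absence of skipped $\vartheta$-palindromic prefixes guaranteed by normalization) is needed; if one prefers to cite~\cite{BuLuZa}, this difficulty is absorbed there and only the reduction to a seed word has to be spelled out.
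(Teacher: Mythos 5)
Your proposal is correct and takes essentially the same route as the paper: the $(\Rightarrow)$ direction is proved there by exactly your left-special-prefix argument (two letters following $\vartheta$ infinitely often make every prefix left special), and the $(\Leftarrow)$ direction by the same constant-increment induction (the paper writes $w_{n_0+1}=ps$ and shows $\mathbf u = ps\bigl(\vartheta(p)\bigr)^{\omega}$), your alternative of citing~\cite{BuLuZa} being precisely what the paper itself does in the analogous binary case. The inductive step you flag as the main obstacle is dispatched in the paper as ``evident from the form of $\Lambda$'', and the clean justification is simply minimality of the closure rather than normalization (which is in any case unavailable over the ternary alphabet): any $\vartheta$-palindromic prefix strictly longer than $w_n$ and strictly shorter than $w_{n+1}=(w_n a)^{\vartheta}$ would contain $w_n a$ as a prefix and hence be a shorter $\vartheta$-palindrome with prefix $w_n a$, a contradiction, so the longest $\vartheta$-palindromic prefix of $w_{n+1}$ followed by $a$ is always $w_n$ and the increment never changes.
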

\begin{proof}
$(\Rightarrow)$: Assume the sequence $\Delta$ contains infinitely many times two distinct letters, say $a$ and $b$. Then the antimorphism $\vartheta$ is followed infinitely many times by both letters $a$ and $b$. This implies that every prefix of $\mathbf u$ is a~left special factor. Therefore $\mathbf u$ is aperiodic.\\
\noindent $(\Leftarrow)$: Let $\Lambda=(va^{\omega}, \sigma \vartheta^{\omega})$. Denote $n_0=\max \{|v|, |\sigma|\}$. Then $w_{n_0+2}=ps\vartheta(p)$, where $\vartheta(a)sa$ is the longest $\vartheta$-palindromic suffix of the word $w_{n_0+1}a$. Let us now construct $w_{n_0+3}$. The longest $\vartheta$-palindromic suffix of $w_{n_0+2}a$ equals $\vartheta(a)w_{n_0+1}a=\vartheta(a)s\vartheta(p)a$. We have used the fact that $w_{n_0+1}=ps=\vartheta(ps)$ and $s=\vartheta(s)$ and that by the form of $\Lambda$, the factor $w_{n_0+1}$ is evidently the longest $\vartheta$-palindromic prefix of $w_{n_0+2}$ followed by $a$. Thus $w_{n_0+3}=ps(\vartheta(p))^2$. Repeating this process we get $\mathbf u=ps(\vartheta(p))^{\omega}$.

\end{proof}
\subsection{Directive bi-sequences containing two antimorphisms}\label{sec:2}
Let us consider directive bi-sequences $(\Delta, \Theta)$, where $\Theta$ contains infinitely many times exactly two distinct antimorphisms. The first lemma holds for any two antimorphisms, while in the sequel we will consider bi-sequences, where one of the antimorphisms equals~$R$.

\begin{lemma}
Let the directive bi-sequence $\Lambda = (\Delta, \Theta)$ satisfy: $\Delta$ contains infinitely many times all letters $0,1,2$ and $\Theta$ contains infinitely many times exactly two antimorphisms $\vartheta_1, \vartheta_2 \in \{E_0, E_1, E_2, R\}$. Then the word $\mathbf{u} = \mathbf{u}(\Delta, \Theta)$ is aperiodic.
\end{lemma}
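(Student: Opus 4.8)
The plan is to show that \emph{every} prefix of $\mathbf u$ is a left special factor; by the complexity results recalled in Section~\ref{sec:CoW}, the presence of a left special factor of every length forces the complexity to be strictly increasing, and hence $\mathbf u$ is aperiodic. Throughout I may use that, since $\vartheta_1$ and $\vartheta_2$ each occur infinitely often in $\Theta$, the language $\mathcal L(\mathbf u)$ is closed under both antimorphisms. I would also fix, at the outset, an index $N$ large enough that $\vartheta_n \in \{\vartheta_1, \vartheta_2\}$ for all $n > N$ (possible because only these two antimorphisms occur infinitely many times) and that $|w_n| \geq |v|$ for all $n \geq N$, so that the prefix $v$ under consideration is a prefix of every $w_n$ with $n \geq N$.

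The core mechanism produces left extensions of $v$ from the directive bi-sequence. Fix a prefix $v$ and any $n > N$. Since $w_n = \vartheta_n(w_n)$ is a $\vartheta_n$-palindrome and $v$ is a prefix of $w_n$, the word $\vartheta_n(v)$ is a suffix of $w_n$. Moreover $w_n\delta_{n+1}$ is a prefix of $w_{n+1} = (w_n\delta_{n+1})^{\vartheta_{n+1}}$, hence a factor of $\mathbf u$, so $\vartheta_n(v)\delta_{n+1}$ is a factor of $\mathbf u$. Applying the closure of $\mathcal L(\mathbf u)$ under $\vartheta_n$ and using that $\vartheta_n$ is an involutory antimorphism,
$$\vartheta_n\bigl(\vartheta_n(v)\,\delta_{n+1}\bigr) = \vartheta_n(\delta_{n+1})\,v \in \mathcal L(\mathbf u).$$
Thus $v$ admits the left extension by the letter $\vartheta_n(\delta_{n+1})$, for every $n > N$.

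It remains to produce two \emph{distinct} such left-extension letters, and this is where the hypothesis that $\Delta$ contains all three letters infinitely often is used. Suppose, for contradiction, that the set $\{\vartheta_n(\delta_{n+1}) : n > N\}$ reduces to a single letter $x$. Then $\delta_{n+1} = \vartheta_n(x)$ for all $n > N$, and since $\vartheta_n \in \{\vartheta_1, \vartheta_2\}$ we get $\delta_{n+1} \in \{\vartheta_1(x), \vartheta_2(x)\}$, a set of at most two letters. Hence the tail of $\Delta$ omits at least one letter, contradicting that $\Delta$ contains all of $0,1,2$ infinitely many times. Therefore at least two distinct letters left-extend $v$, so $v$ is left special; as $v$ was an arbitrary prefix, $\mathbf u$ is aperiodic. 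I expect no genuine obstacle here: the only point requiring care is the correct bookkeeping in the closure step (making sure $\vartheta_n$ is applied to a factor that actually ends in $\vartheta_n(v)$), after which the counting argument against all three letters appearing infinitely often finishes the proof cleanly.
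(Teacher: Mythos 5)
Your proof is correct and follows essentially the same route as the paper: both arguments show that every prefix $v$ of $\mathbf u$ is left special by using that each pseudopalindromic prefix $w_n = \vartheta_n(w_n)$ ends in $\vartheta_n(v)$ and is followed in $\mathbf u$ by $\delta_{n+1}$, so that closure of $\mathcal L(\mathbf u)$ under $\vartheta_n$ yields the left extension $\vartheta_n(\delta_{n+1})v$. The only cosmetic difference is how the pigeonhole is phrased: the paper exhibits one antimorphism followed infinitely often by two distinct letters, while you argue by contradiction that the extension letters $\vartheta_n(\delta_{n+1})$ cannot all equal a single letter $x$, since then the tail of $\Delta$ would lie in $\{\vartheta_1(x),\vartheta_2(x)\}$ --- the same two-antimorphisms-versus-three-letters counting.
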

\begin{proof}
Thanks to the form of $\Lambda$ it is possible to choose two sequences of indices, say $(k_n)^{\infty}_{n=1}$ and $(\ell_n)^{\infty}_{n=1}$, such that for all $n \in \mathbb{N}$ there exist two distinct letters $\delta_1$, $\delta_2$ and an antimorphism $\vartheta$ satisfying $\delta_{k_n} = \delta_1$, $\delta_{\ell_n} = \delta_2$ and $\vartheta_{k_n-1} = \vartheta_{\ell_n-1}=\vartheta$. In other words, the same antimorphism is followed infinitely many times by two distinct letters. This implies that every prefix of $\mathbf{u}$ is left special.
\end{proof}

\begin{proposition} \label{th:per}
Let the directive bi-sequence $\Lambda = (\Delta, \Theta)$ of a~ternary generalized pseudostandard word $\mathbf{u}$ satisfy: The sequence $\Theta = \vartheta_1\vartheta_2\cdots$ contains infinitely many times exactly two distinct antimorphisms $\vartheta$ and $R$. The sequence $\Delta = \delta_1\delta_2\cdots$ contains infinitely many times two (not necessarily distinct) letters $a$ and $b$. Then the word $\mathbf{u} = \mathbf{u}(\Delta,\Theta)$ is periodic if and only if the directive bi-sequence $(\Delta, \Theta)$ satisfies: $\vartheta(a) = b$ and there exists $n_0 \in \mathbb N$ such that for all $n>n_0$ either
\begin{align}\label{eq:podminka2}
\vartheta_n = \vartheta \Rightarrow \delta_{n+1} = a \ \text{and} \ \vartheta_n=R \Rightarrow \delta_{n+1}=b,
\end{align}
or
\begin{align}\label{eq:podminka2b}
\vartheta_n = \vartheta \Rightarrow \delta_{n+1} = b \ \text{and} \ \vartheta_n=R \Rightarrow \delta_{n+1}=a.
\end{align}
\end{proposition}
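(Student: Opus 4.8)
The plan is to establish both implications, reducing in each case to machinery already in place. Throughout, the two antimorphisms occurring infinitely often are $R$ and some $\vartheta\in\{E_0,E_1,E_2\}$, and the decisive algebraic fact in the ``periodic'' direction will be $\vartheta(a)=b$, equivalently $\vartheta(b)=a$.

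For sufficiency, I would assume $\vartheta(a)=b$ together with one of~\eqref{eq:podminka2},~\eqref{eq:podminka2b} holding for all $n>n_0$, and first invoke Corollary~\ref{cor:norm2} to replace $\Lambda$ by an equivalent bi-sequence $\widetilde\Lambda=(v(ab)^{\omega},\sigma(R\vartheta)^{\omega})$ with $|v|=|\sigma|$, which generates the same word $\mathbf u$. Lemma~\ref{lem:norm2} then guarantees that past the index $n_0:=|v|$ the prefix sequence $(w_n)$ already records every palindromic and every $\vartheta$-palindromic prefix of $\mathbf u$, so $\widetilde\Lambda$ behaves like a normalized bi-sequence from that point on. At this stage I would mimic the induction from the proof of Theorem~\ref{thm:periodicitybinary}, with the pair $\{R,\vartheta\}$ playing the role that $\{\vartheta,\overline\vartheta\}$ plays in the binary setting: the claim is that each $w_n$ with $n>n_0$ has one of the two closed shapes analogous to~\eqref{eq:tvar1} and~\eqref{eq:tvar2}, being a palindrome or a $\vartheta$-palindrome accordingly. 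The inductive step is the computation of the longest palindromic (resp. $\vartheta$-palindromic) suffix of $w_n\delta_{n+1}$; because $\widetilde\Lambda$ is normalized from $n_0$ on, the longest relevant pseudopalindromic proper prefix of $w_n$ is exactly $w_{n-1}$, and the relation $\vartheta(a)=b$ together with $\vartheta R=R\vartheta$ makes the candidate suffix come out as required. From the resulting forms the period of $\mathbf u$ can be read off directly, just as~\eqref{eq:period} is read off in the binary case.

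For necessity I would argue the contrapositive: if the condition fails, then every prefix $v$ of $\mathbf u$ is left special, so the complexity is strictly increasing and $\mathbf u$ is aperiodic (and, being recurrent, cannot be eventually periodic). Since both $R$ and $\vartheta$ occur infinitely often, the language $\mathcal L(\mathbf u)$ is closed under both. The extension mechanism is the standard one: if $w_n$ is a palindrome with prefix $v$ and $\delta_{n+1}=c$, then $R(v)c\in\mathcal L(\mathbf u)$, whence $cv\in\mathcal L(\mathbf u)$; if $w_n$ is a $\vartheta$-palindrome with prefix $v$ and $\delta_{n+1}=c$, then $\vartheta(v)c\in\mathcal L(\mathbf u)$, whence $\vartheta(c)v\in\mathcal L(\mathbf u)$. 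Thus a type $(\vartheta_n,\delta_{n+1})$ contributes the left-extension letter $c$ when $\vartheta_n=R$ and $\vartheta(c)$ when $\vartheta_n=\vartheta$. Beyond some index every $\delta_n$ lies in $\{a,b\}$, so the contributed letter is a function of $(\vartheta_n,\delta_{n+1})\in\{\vartheta,R\}\times\{a,b\}$. When $\vartheta(a)=b$ the four types give $(\vartheta,a)\mapsto b$, $(\vartheta,b)\mapsto a$, $(R,a)\mapsto a$, $(R,b)\mapsto b$, and the patterns~\eqref{eq:podminka2},~\eqref{eq:podminka2b} are precisely the two ways of forcing all extensions to agree (all $b$, resp. all $a$); if neither pattern holds past any $n_0$, then beyond every bound some position yields extension $a$ and some yields extension $b$, so both $av$ and $bv$ are factors and $v$ is left special. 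When $\vartheta(a)\neq b$ (including $a=b$ with $a$ not fixed by $\vartheta$), a short check of the involution $\vartheta=E_i$ shows one of $a,b$ is the fixed letter $i$; the occurrences of $a$ and of $b$, each infinite by hypothesis, then contribute two distinct extension letters infinitely often, again forcing left specialness. Collecting the cases yields aperiodicity whenever the condition fails.

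The main obstacle is the sufficiency induction: one must verify rigorously that the longest palindromic, resp. $\vartheta$-palindromic, suffix of each $w_n\delta_{n+1}$ is the expected one, which rests on normalization from $n_0$ on (Lemma~\ref{lem:norm2}) and on $\vartheta$ commuting with $R$. The necessity direction is largely bookkeeping of the extension table, the only genuinely ternary subtlety being the isolation of the sub-case $\vartheta(a)\neq b$, which has no counterpart over the binary alphabet.
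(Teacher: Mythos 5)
Your proposal is correct and follows essentially the same route as the paper's own proof: sufficiency via Corollary~\ref{cor:norm2} and Lemma~\ref{lem:norm2} followed by an induction giving explicit alternating palindromic/$\vartheta$-palindromic prefix forms from which the period is read off, and necessity via the left-special-prefix extension argument, split into the same two cases (both patterns failing while $\vartheta(a)=b$, and $\vartheta(a)\neq b$). The only blemish is the side claim that $\vartheta(a)\neq b$ forces one of $a,b$ to be the letter fixed by $\vartheta$ --- this fails in the sub-case $a=b$ with $a$ not fixed --- but your extension table still yields two distinct left extensions there (occurrences of $a$ after $R$ contribute $a$, after $\vartheta$ contribute $\vartheta(a)\neq a$), so the argument goes through unchanged.
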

\begin{proof}
($\Rightarrow$):
\begin{enumerate}
\item If neither~\eqref{eq:podminka2} nor~\eqref{eq:podminka2b} is met, then the letters $a$ and $b$ are necessarily distinct and it happens infinitely many times that the same antimorphism is followed by two distinct letters. This implies that every prefix of $\mathbf u$ is left special.
\item Assume without loss of generality that condition~\eqref{eq:podminka2} is satisfied. If $\vartheta(a) \neq b$, then every prefix of $\mathbf{u}$ is left special: Let $v$ be a~prefix of $\mathbf{u}$. Let $v$ be contained in the prefix $w_k = R(w_k)$, where $k>n_0$. Then by condition~\eqref{eq:podminka2} $w_k b \in \mathcal{L}({\mathbf{u}})$, consequently $R(b)R(w_k) = bw_k \in \mathcal{L}({\mathbf{u}})$. Let us similarly find $\ell$, where $\ell >n_0$, such that $v$ is a~prefix of $w_\ell = \vartheta (w_\ell)$. Then $w_\ell a \in \mathcal{L}({\mathbf{u}})$, and consequently $\vartheta(a)\vartheta(w_\ell) = \vartheta(a)w_\ell \in \mathcal{L}({\mathbf{u}})$. Thanks to the assumption that $\vartheta(a) \neq b$, the prefix $v$ is a~left special factor of $\mathbf{u}$.

\end{enumerate}
($\Leftarrow$): According to Corollary~\ref{cor:norm2} we can assume without loss of generality that $\Lambda = (v(ab)^{\omega}, \sigma(R\vartheta)^{\omega})$, where $v \in \{0,1,2\}^*$, $\sigma \in \{E_0,E_1,E_2,R\}^*$ and $|v| = |\sigma|$. Denote $n_0 = |\sigma |$, i.e., $w_{n_0+1} = R(w_{n_0+1})$ and $w_{n_0+2} = \vartheta(w_{n_0+2})$. Denote further $\vartheta(b)sb=asb$ the longest $\vartheta$-palindromic suffix of the word $w_{n_0+1}b$. We can thus write $w_{n_0+2} = ps\vartheta(p) = p\vartheta(s)\vartheta(p) = p\vartheta(ps)$ for some prefix $p$ of the word $w_{n_0+1}$. Thanks to Lemma~\ref{lem:norm2}, the factor $w_{n_0+1}$ is the longest palindromic prefix of $w_{n_0+2}$ followed by the letter $b$, therefore $\vartheta(b)\vartheta(w_{n_0+1})a=a\vartheta(w_{n_0+1})a=a\vartheta(ps)a$ is the longest palindromic suffix of the word $w_{n_0+2}a$. Consequently, we have $w_{n_0+3} = p\vartheta(ps)R(p) = ps\vartheta(p)R(p)$. Let us show by induction that for all $k \geq 1$ the following holds:
\begin{align}
w_{n_0+2k-1} &= ps(\vartheta(p)R(p))^{k-1},  \label{tvar1}\\[3mm]
w_{n_0+2k} &= ps(\vartheta(p)R(p))^{k-1}\vartheta(p). \label{tvar2}
\end{align}

For $k = 1$ the relations~\eqref{tvar1} and~\eqref{tvar2} hold. Let them hold for some $k \geq 1$. Then $w_{n_0+2k+1} = (w_{n_0+2k}a)^{R}$. We know that $\vartheta(w_{n_0+2k-1})$ is a~palindromic suffix of the word $w_{n_0+2k}$ preceded by the letter $a=\vartheta(b)$ and, thanks to Lemma~\ref{lem:norm2}, this suffix is the longest palindromic suffix of the word $w_{n_0+2k}$ preceded by the letter $a=\vartheta(b)$. It follows:
\begin{align}
w_{n_0+2k+1} =&\;(w_{n_0+2k}a)^{R} \nonumber \\[3mm]
=&\;(ps(\vartheta(p)R(p))^{k-1}\vartheta(p)a)^{R} \nonumber \\[3mm]
=&\;(p\vartheta(w_{n_0+2k-1})a)^{R} \nonumber \\[3mm]
=&\;p\vartheta(w_{n_0+2k-1})R(p) \nonumber \\[3mm]
=&\;ps(\vartheta(p)R(p))^{k}. \nonumber
\end{align}
Similarly, $R(w_{n_0+2k})$ is the longest $\vartheta$-palindromic suffix of the word $w_{n_0+2k+1}$ preceded by the letter $R(a)=a$.
\begin{align}
w_{n_0+2(k+1)} =&\;(w_{n_0+2k+1}b)^{\vartheta} \nonumber \\[3mm]
=&\;(ps(\vartheta(p)R(p))^k b)^{\vartheta} \nonumber \\[3mm]
=&\;(pR(w_{n_0+2k})b)^{\vartheta} \nonumber \\[3mm]
=&\;pR(w_{n_0+2k})\vartheta(p) \nonumber \\[3mm]
=&\;ps(\vartheta(p)R(p))^{k}\vartheta(p). \nonumber
\end{align}
For arbitrary $k \geq 1$ the factor $w_{n_0+2k-1}$ is a~palindrome and is of the form~\eqref{tvar1}, hence $\mathbf{u}$ is periodic with the period  $R(\vartheta(p)R(p)) = pR\vartheta(p)$.
\end{proof}
Let us describe all combinations of antimorphisms and letters from Proposition~\ref{th:per} for which a~periodic word occurs, i.e., for which either condition~\eqref{eq:podminka2} or condition~\eqref{eq:podminka2b} is met and at the same time $\vartheta(a) = b$.
\begin{align}
\vartheta_n &= R \Rightarrow \delta_{n+1} = 0& &\text{and} &\vartheta_n &= E_0 \Rightarrow \delta_{n+1} = 0, \nonumber \\[3mm]
\vartheta_n &= R \Rightarrow \delta_{n+1} = 1& &\text{and} &\vartheta_n &= E_0 \Rightarrow \delta_{n+1} = 2, \nonumber \\[3mm]
\vartheta_n &= R \Rightarrow \delta_{n+1} = 2& &\text{and} &\vartheta_n &= E_0 \Rightarrow \delta_{n+1} = 1,
\nonumber
\end{align}
\begin{align}
\vartheta_n &= R \Rightarrow \delta_{n+1} = 1& &\text{and} &\vartheta_n &= E_1 \Rightarrow \delta_{n+1} = 1, \nonumber \\[3mm]
\vartheta_n &= R \Rightarrow \delta_{n+1} = 0& &\text{and} &\vartheta_n &= E_1 \Rightarrow \delta_{n+1} = 2, \nonumber \\[3mm]
\vartheta_n &= R \Rightarrow \delta_{n+1} = 2& &\text{and} &\vartheta_n &= E_1 \Rightarrow \delta_{n+1} = 0, \nonumber
\end{align}
\begin{align}
\vartheta_n &= R \Rightarrow \delta_{n+1} = 2& &\text{and} &\vartheta_n &= E_2 \Rightarrow \delta_{n+1} = 2, \nonumber \\[3mm]
\vartheta_n &= R \Rightarrow \delta_{n+1} = 0& &\text{and} &\vartheta_n &= E_2 \Rightarrow \delta_{n+1} = 1, \nonumber \\[3mm]
\vartheta_n &= R \Rightarrow \delta_{n+1} = 1& &\text{and} &\vartheta_n &= E_2 \Rightarrow \delta_{n+1} = 0. \nonumber
\end{align}

\begin{example}
Consider the directive bi-sequence $\Lambda = (0(211)^{\omega}, R(E_0E_0R)^{\omega})$. According to Corollary~\ref{cor:norm2} we construct a~new bi-sequence $\widetilde{\Lambda} = (02(12)^{\omega}, RE_0(RE_0)^{\omega})$ such that $\mathbf{u}(\Lambda) = \mathbf{u}(\widetilde{\Lambda})$. Let us write down the first few prefixes of $\mathbf{u}(\widetilde{\Lambda})$:
\begin{align}
\widetilde{w}_1 =&\;0 \nonumber \\[3mm]
\widetilde{w}_2 =&\;0210 \nonumber \\[3mm]
\widetilde{w}_3 =&\;0210120 \nonumber \\[3mm]
\widetilde{w}_4 =&\;0210120210 \nonumber \\[3mm]
\widetilde{w}_5 =&\;0210120210120. \nonumber
\end{align}
Using the notation from the proof of Proposition~\ref{th:per} we have $\widetilde{w}_{n_0+1} = \widetilde{w}_3$, $s = 0120$ and $p = 021$. The period of the word $\mathbf{u}(\widetilde{\Lambda})$ is therefore equal to $pRE_0(p) = 021012$, i.e., $\mathbf{u}(\widetilde{\Lambda}) = (021012)^{\omega}$.
\end{example}

\subsection{Directive bi-sequences containing three or four antimorphisms}\label{sec:3or4}
Let us now treat the remaining bi-sequences. At first, we show that all generalized pseudostandard words having in their directive bi-sequence infinitely many times three or four antimorphisms such that one of them equals $R$ are aperiodic.
\begin{proposition}\label{more_antimorphismR}
Let $\mathbf{u} = \mathbf{u}(\Delta, \Theta)$, where $\Theta$ contains infinitely many times three or four antimorphisms including $R$. Then the word $\mathbf{u}$ is aperiodic.
\end{proposition}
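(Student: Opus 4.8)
The plan is to show that $\mathbf u$ admits left special factors of arbitrarily large length; by the facts recalled in Section~\ref{sec:CoW} this forces the complexity $\mathcal C_{\mathbf u}$ to increase at infinitely many lengths, hence to be unbounded, and since $\mathbf u$ is recurrent this means $\mathbf u$ is aperiodic. First I would record the structural consequence of the hypothesis: if three or four antimorphisms including $R$ occur infinitely often in $\Theta$, then $R$ together with at least two \emph{distinct} exchange antimorphisms $E_i,E_j$ occur infinitely often, so $\mathcal L(\mathbf u)$ is closed under each of $R,E_i,E_j$. In particular it is closed under the pure morphisms $R\circ E_i$ and $R\circ E_j$, which are two distinct transpositions of letters and thus generate every letter permutation; this symmetry of the language is a tool that will help convert one type of extension into another.

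The key mechanism is a conversion between right and left extensions. For large $n$ the antimorphism $\vartheta_n$ is one of the infinitely occurring ones and $w_n=\vartheta_n(w_n)$; hence, whenever $w_n r\in\mathcal L(\mathbf u)$ for a letter $r$, closure under $\vartheta_n$ yields $\vartheta_n(r)w_n=\vartheta_n(w_n r)\in\mathcal L(\mathbf u)$, so every right extension $r$ of $w_n$ produces the left extension $\vartheta_n(r)$, and distinct $r$ give distinct $\vartheta_n(r)$. Consequently $w_n$ is left special as soon as it is right special, and since $|w_n|\to\infty$ it suffices to prove that $w_n$ is right special for infinitely many $n$. The prefix continuation already gives one right extension $\delta_{n+1}$ of $w_n$, so the whole problem reduces to exhibiting, infinitely often, a \emph{second} right extension of $w_n$.

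To produce this second extension I would split according to the letters $\vartheta_n(\delta_{n+1})$ induced at large indices. If these are not eventually constant, then two large indices $n,m$ with $\vartheta_n(\delta_{n+1})\neq\vartheta_m(\delta_{m+1})$ give, by the mechanism above, two distinct left extensions of any common prefix $w_k$ with $k\le\min\{n,m\}$, so all long prefixes are left special and we are done. The delicate case --- and the main obstacle --- is when $\vartheta_n(\delta_{n+1})$ is eventually a constant letter $c$, i.e.\ $\delta_{n+1}=\vartheta_n(c)$ for all large $n$; here the naive left extensions all coincide, so one must instead find a genuine second right extension of $w_n$ coming from a different occurrence. For this I would analyse the closure step $w_{n+1}=(w_n\delta_{n+1})^{\vartheta_{n+1}}$ at indices where $\vartheta_{n+1}$ is an exchange antimorphism distinct from $\vartheta_n$: controlling the longest $\vartheta_{n+1}$-palindromic suffix of $w_n\delta_{n+1}$, as in the suffix computations of Lemma~\ref{lem:norm} and Proposition~\ref{th:per}, shows that the reflected block $\vartheta_{n+1}(u)$ appended by the closure makes an earlier pseudopalindromic prefix reoccur with a right neighbour different from its prefix continuation. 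Since at least two distinct exchange antimorphisms are available, the incompatible reflection conditions they impose should force such a mismatch infinitely often, yielding the required second extension and completing the proof.
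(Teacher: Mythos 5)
Your reduction of the problem is fine as far as it goes: the conversion of right extensions of the pseudopalindromic prefixes $w_n$ into left extensions via closure of $\mathcal L(\mathbf u)$ under $\vartheta_n$ is correct, and your ``easy case'' (the letters $\vartheta_n(\delta_{n+1})$ are not eventually constant) is exactly the first step of the paper's proof: there it appears as the observation that if the letter following each antimorphism is not eventually fixed, or if the images of these letters under the respective antimorphisms do not all coincide, then every prefix of $\mathbf u$ is left special. The genuine gap is in the remaining case, where $\vartheta_n(\delta_{n+1})$ is eventually a constant letter --- which is precisely where all the work lies, since this is the case that is \emph{compatible} with periodicity in the two-antimorphism setting (Proposition~\ref{th:per}). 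Here your argument reduces to the sentence that incompatible reflection conditions ``should force such a mismatch infinitely often''; no mechanism is given, and the claim as you state it is not even clearly the right target. The paper does not show that the prefixes $w_n$ acquire a second right extension; instead, it assumes (for contradiction) that $\mathbf u$ is periodic, isolates the infinitely many occurrences of a block $(RE_\ell)$ in $\Theta$, and shows that if the closure step following such a block is \emph{not} the palindromic closure, then the skipped longest palindromic suffix $E_\ell(a)E_\ell\bigl(w^{(n)}_{m}\bigr)e$ of $w^{(n)}_{m+1}e$ can be extended to a maximal palindrome $s^{(n)}$ flanked by two distinct letters; closure under $R$ then makes $s^{(n)}$ bispecial, and these bispecial factors have unbounded length. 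Note that the $s^{(n)}$ are \emph{not} prefixes of $\mathbf u$, so your plan of making the prefixes themselves right special is aiming at a stronger (and unproven) statement. The conclusion of the paper's argument is that, under periodicity, every $(RE_\ell)$ block must be followed by $R$, then by $E_\ell$, and so on, forcing $\Theta=\sigma(RE_\ell)^{\omega}$, which contradicts the presence of three or four antimorphisms occurring infinitely often.

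The second, related omission is normalization. The paper begins by replacing $(\Delta,\Theta)$ with a normalized bi-sequence (which exists over ternary alphabet even though no algorithm is known), and this hypothesis is used twice in an essential way: first to identify $E_\ell(a)E_\ell\bigl(w^{(n)}_{m}\bigr)e$ as the longest $\vartheta$-palindromic suffix of $w^{(n)}_{m+1}e$ for \emph{every} antimorphism $\vartheta$, and second to guarantee that the maximal palindromic extension $s^{(n)}$ of this suffix cannot reach a prefix of $\mathbf u$ (a skipped palindromic prefix would contradict normalization), which is what forces $s^{(n)}$ to be flanked by two distinct letters and hence bispecial. Without normalization, a ``pseudopalindromic prefix reoccurring with a different right neighbour'' simply cannot be derived: non-normalized bi-sequences routinely skip pseudopalindromic prefixes without any consequence for periodicity, as the paper's own Example~\ref{ex:norm} shows. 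So your sketch is missing both the key hypothesis (normalized directive bi-sequence) and the key construction (the non-prefix bispecial factors $s^{(n)}$), and the case it leaves open is exactly the one the proposition is about.
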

\begin{proof}
Assume without loss of generality that the bi-sequence $(\Delta, \Theta)$ is normalized. (It is easy to see that a~normalized form always exists and the original bi-sequence is its subsequence.) Assume now for contradiction that the word $\mathbf{u}$ is periodic. It holds then:
\begin{enumerate}
\item There exists $n_0$ such that for all $n > n_0$ the following conditions are satisfied:
\begin{itemize}
\item $\vartheta_n = R \Rightarrow \delta_{n+1} = a$,
\item $\vartheta_n = E_i \Rightarrow \delta_{n+1} = b$,
\item $\vartheta_n = E_j \Rightarrow \delta_{n+1} = c$,
\item alternatively $\vartheta_n = E_k \Rightarrow \delta_{n+1} = d$;
\end{itemize}
\item \label{eq:pism} moreover $a = R(a) = E_i(b) = E_j(c) (=E_k(d))$,
\end{enumerate}
where $\Theta = \vartheta_1\vartheta_2\cdots$, $\Delta = \delta_1\delta_2\cdots$, $a,b,c,i,j,k \in \{0,1,2\}$ and $i,j,k$ are mutually different indices. If at least one of the above conditions is not met, then it can be easily shown that every prefix of $\mathbf{u}$ is left special, and the word $\mathbf{u}$ is thus aperiodic.

Consider $\ell \in \{0,1,2\}$ such that $RE_\ell$ occurs infinitely many times in $\Theta$. Denote $(e_na,RE_\ell)$, $n > n_0$, the corresponding factors of the bi-sequence $(\Delta, \Theta)$. Denote $w^{(n)}_m$ the corresponding palindromic prefixes and $w^{(n)}_{m+1}$ the corresponding $E_\ell$-palindromic prefixes and let $e$ denote the letter that follows the $E_\ell$-palindromes $w^{(n)}_{m+1}$. Let us now study the form of $w^{(n)}_{m+2}$. It holds that the longest palindromic suffix of the word $w^{(n)}_{m+1}e$ is thanks to the normalized form of the directive bi-sequence equal to $E_\ell(a)E_\ell(w^{(n)}_{m})e$ since according to condition~\ref{eq:pism} we have $E_\ell(a) = e$. Moreover using again the fact that the directive bi-sequence is normalized, the factor $E_\ell(a)E_\ell(w^{(n)}_{m})e$ is the longest $\vartheta$-palindromic suffix of the word $w^{(n)}_{m+1}e$ for any antimorphism $\vartheta$.

There exists $n_1 > n_0$ such that for all $n > n_1$ the factor $w^{(n)}_{m+2}$ is a~palindrome. Suppose that such an index $n_1$ does not exist. It means that for infinitely many indices $n$, when constructing $w^{(n)}_{m+2}$, we look for a~shorter $\vartheta$-palindromic suffix of the word $w^{(n)}_{m+1}e$ than its palindromic suffix $E_\ell(a)E_\ell(w^{(n)}_{m})e$. If we now extend the longest suffix $E_\ell(a)E_\ell(w^{(n)}_{m})e$ stepwise by one letter to the right and one letter to the left so that the obtained word remains a~palindrome, we get a~palindrome $s^{(n)}$ that is surrounded by distinct letters from right and from left. Since ${\mathcal L}(\mathbf u)$ is closed under the antimorphism $R$, the factor $s^{(n)}$ is a~bispecial factor and the word $\mathbf{u}$ is aperiodic, which is a~contradiction.

Similarly, it is possible to show that there exists $n_2 > n_1$ such that for all $n > n_2$ the factor $w^{(n)}_{m+3}$ is an $E_\ell$-palindrome. 

The above arguments imply that $\Theta = \sigma(RE_\ell)^{\omega}$ for some $\sigma\in \{R,E_0,E_1,E_2\}^*$, which is a~contradiction.
\end{proof}
\begin{example}
Since the construction of bispecials $s^{(n)}$ from the previous proof is quite complicated, let us illustrate it on an example. Consider $\mathbf u=\mathbf u((210)^{\omega}, (E_0 E_1 R)^{\omega})$. This bi-sequence is not normalized, however it serves as a~simple example for illustration of the previous proof. Let us write down the first five prefixes:
\begin{align}
{w}_1 =&\;21 \nonumber \\[3mm]
{w}_2 =&\;2110 \nonumber \\[3mm]
{w}_3 =&\;21100112 \nonumber \\[3mm]
{w}_4 =&\; 21100112200221 \nonumber \\[3mm]
{w}_5 =&\; 211\underline{00112200221100}22001122110. \nonumber
\end{align}
Indeed, $E_0(w_3)=12200221$ is a~palindromic suffix of $w_4$ that may be extended stepwise so that the obtained word $s=00112200221100$ is still a~palindrome. The word $s$ is underlined in the prefix $w_5$. Moreover, we can see that $s$ cannot be extended as a~palindrome any more since the next factor is $1s2$. The language ${\mathcal L}(\mathbf u)$ is closed under $R$, therefore $2s1$ is a~factor of $\mathbf u$, too. Since we work in the proof of Proposition~\ref{more_antimorphismR} with a~normalized bi-sequence, it is guaranteed that $s$ cannot be extended to a~prefix of $\mathbf u$. In such a~case $s$ would be a~skipped palindromic prefix of $\mathbf u$.
\end{example}
In order to treat the remaining directive bi-sequences in the last proposition, the following remark and corollary are needed.
\begin{remark} \label{poz:e}
For $i, j, k \in \{0,1,2\}$ mutually different, we have $E_iE_jE_k = E_j$.
\end{remark}
\begin{corollary}\label{cor:ecka}
Let for some $i \in \{0,1,2\}$ and $v \in \{0,1,2\}^*$ hold $v = E_i(v)$. Let further $j, k \in \{0,1,2\}$ be such that $i, j, k$ are mutually different. Then $E_j(v) = E_kE_j(v)$, i.e., $E_j(v)$ is an $E_k$-palindrome.
\end{corollary}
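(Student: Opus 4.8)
The goal is to reformulate the statement into a composition identity and then read it off from Remark~\ref{poz:e}. Recall that for an involutory antimorphism $\vartheta$, a word $w$ is a~$\vartheta$-palindrome exactly when $\vartheta(w)=w$. Thus the hypothesis $v=E_i(v)$ says precisely that $v$ is an $E_i$-palindrome, and the conclusion ``$E_j(v)$ is an $E_k$-palindrome'' is nothing but the equality $E_k\bigl(E_j(v)\bigr)=E_j(v)$. So the whole corollary reduces to verifying this single identity, and the plan is to do so by substituting the hypothesis and invoking the composition rule.

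I would start from the left-hand side $E_k\bigl(E_j(v)\bigr)$ and use the hypothesis $v=E_i(v)$ to insert an extra antimorphism, obtaining
$$E_k\bigl(E_j(v)\bigr)=E_k\bigl(E_j(E_i(v))\bigr)=(E_kE_jE_i)(v).$$
Since $i,j,k$ are mutually different, Remark~\ref{poz:e} applies to the composite $E_kE_jE_i$ (its outer indices are $k$ and $i$, its middle index is $j$), yielding $E_kE_jE_i=E_j$. Therefore $(E_kE_jE_i)(v)=E_j(v)$, which gives $E_k\bigl(E_j(v)\bigr)=E_j(v)$, exactly the assertion that $E_j(v)$ is an $E_k$-palindrome.

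There is essentially no obstacle here: the only point that requires any attention is making sure Remark~\ref{poz:e} is invoked with the correct index placement, namely that the surviving antimorphism is the one in the \emph{middle} of the triple $E_kE_jE_i$. Because $\{i,j,k\}=\{0,1,2\}$ consists of three distinct indices, the hypothesis of the remark is automatically met, so the substitution and the single application of the composition rule complete the argument. The corollary is thus an immediate consequence of Remark~\ref{poz:e} together with the involutive-antimorphism characterization of pseudopalindromes.
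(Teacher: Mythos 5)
Your proof is correct and is exactly the argument the paper intends: the corollary is stated without proof as an immediate consequence of Remark~\ref{poz:e}, and your substitution $E_k\bigl(E_j(v)\bigr)=(E_kE_jE_i)(v)=E_j(v)$ (using $v=E_i(v)$ and the fact that the middle antimorphism survives the composition) fills in that one-line derivation faithfully.
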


\begin{proposition}
Let $\mathbf{u} = \mathbf{u}(\Delta, \Theta)$ be a~ternary generalized pseudostandard word, where $\Theta$ contains infinitely many times $E_i, E_j$, or $E_i, E_j, E_k$ with $i, j, k$ mutually distinct. Then $\mathbf{u}$ is periodic if and only if
\begin{equation} \label{eq:tvar}
(\widetilde{\Delta},\widetilde{\Theta}) = (v(ijk)^{\omega}, \sigma(E_iE_kE_j)^{\omega}),
\end{equation}
where $(\widetilde{\Delta},\widetilde{\Theta})$ is the normalized form of $(\Delta, \Theta)$, $v \in \{0,1,2\}^*$, $\sigma \in \{E_0,E_1,E_2,R\}^*$ and $|v|=|\sigma|$.
\end{proposition}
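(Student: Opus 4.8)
The plan is to treat the two implications separately and, in both, to reduce at once to the normalized bi-sequence: since $(\widetilde{\Delta},\widetilde{\Theta})$ generates the same word $\mathbf u$ and the statement is phrased entirely through the normalized form, I would assume throughout that $(\Delta,\Theta)$ is already normalized, so that by definition the sequence $(w_n)$ contains \emph{all} pseudopalindromic prefixes of $\mathbf u$. The cases already disposed of --- one antimorphism (Lemma~\ref{1antimorphism}), two antimorphisms with one equal to $R$ (Proposition~\ref{th:per}), and three or four antimorphisms including $R$ (Proposition~\ref{more_antimorphismR}) --- leave exactly the situation here, where the antimorphisms occurring infinitely often lie among $\{E_0,E_1,E_2\}$ and number two or three; in particular $R$ occurs only finitely often.

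For the implication $(\Leftarrow)$ I would argue constructively, in the spirit of the $(\Leftarrow)$ part of Proposition~\ref{th:per}. Put $n_0=|v|=|\sigma|$, follow the construction through one full period of the directive bi-sequence, and establish by induction on the period index closed forms for the triples $w_{n_0+3m},\,w_{n_0+3m+1},\,w_{n_0+3m+2}$ analogous to~\eqref{tvar1}--\eqref{tvar2}. The verification that each pseudopalindromic closure has the claimed shape rests on the identities $E_\ell(\ell)=\ell$, on Remark~\ref{poz:e} ($E_iE_jE_k=E_j$), and on Corollary~\ref{cor:ecka}, which together guarantee that an $E_i$-palindromic prefix, once extended by the cyclically prescribed letter, closes first to an $E_k$- and then to an $E_j$-palindrome in a controlled way. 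From these closed forms a finite word is read off as a period of $\mathbf u$ (with $v,\sigma$ empty this period is $ijk$; nontrivial prefixes give a period that is a multiple of $3$). Since generalized pseudostandard words are recurrent, eventual periodicity yields pure periodicity.

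For the implication $(\Rightarrow)$ I would prove the contrapositive: if the normalized bi-sequence is not of the form~\eqref{eq:tvar}, then $\mathbf u$ is aperiodic, which I would witness by producing, for every prefix, a left special (or bispecial) extension. First comes the standard coherence step: if some antimorphism $E_\ell$ occurring infinitely often is followed by two distinct letters at infinitely many positions, then for an arbitrary prefix $v$ one finds arbitrarily long $E_\ell$-palindromic prefixes having $E_\ell(v)$ as a suffix, and closure of $\mathcal L(\mathbf u)$ under $E_\ell$ makes $v$ left special; hence periodicity forces, from some index $n_0$ on, a unique letter to follow each infinitely-occurring antimorphism. Next I would show that periodicity together with normalization forces all three exchange antimorphisms to occur infinitely often in a single cyclic pattern: the last letter of an $E_\ell$-palindromic prefix equals $E_\ell(u_0)$, where $u_0$ is the first letter of $\mathbf u$, and the three maps $E_0,E_1,E_2$ send $u_0$ to three distinct letters; realizing all three final letters --- unavoidable because $\mathbf u$, being closed under two distinct exchange antimorphisms, uses all three letters and, being normalized, records every pseudopalindromic prefix --- requires all of $E_i,E_j,E_k$. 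Corollary~\ref{cor:ecka} is precisely what produces the missing third type, manufacturing an $E_k$-palindrome out of an $E_i$-palindrome, so the normalized sequence cannot avoid $E_k$. Matching each antimorphism with its forced following letter and discarding every cyclic order other than $E_iE_kE_j$ (each wrong order producing, via closure under the relevant $E$'s, a prefix with two distinct left extensions) pins the periodic part down to $(v(ijk)^\omega,\sigma(E_iE_kE_j)^\omega)$.

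The main obstacle is the $(\Rightarrow)$ direction, and within it the step that upgrades ``periodic and coherent'' to the \emph{exact} cyclic order $E_iE_kE_j$ synchronized with the letters $ijk$. The coherence step is routine, but showing that no other synchronized pattern survives demands careful bookkeeping of the pseudopalindromic prefixes --- their first and last letters, which closure introduces the third antimorphism, and exactly which deviation from the cyclic order manufactures a left special or bispecial factor. This bookkeeping, rather than any single estimate, is where the real work lies.
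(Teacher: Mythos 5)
Your $(\Leftarrow)$ direction is essentially the paper's argument: the paper also assumes the normalized form, sets $n_0=|v|=|\sigma|$, and derives closed forms $w_{n_0+2}=psE_1(p)$, $w_{n_0+3}=psE_1(p)E_2(p)$, $w_{n_0+4}=psE_1(p)E_2(p)E_0(p)$, using normalization and Corollary~\ref{cor:ecka} to identify the longest pseudopalindromic suffix at each step, concluding $\mathbf{u}=ps\bigl(E_1(p)E_2(p)E_0(p)\bigr)^{\omega}$. Likewise, your coherence step in $(\Rightarrow)$ (each infinitely occurring antimorphism eventually followed by a unique letter, and the images of these letters under the respective antimorphisms must agree) is exactly the paper's first reduction. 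The gap lies in what you yourself identify as the hard part: upgrading coherence to the exact cyclic pattern.

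Concretely, your proposed route for that step does not work as sketched. The claim that ``realizing all three final letters is unavoidable'' is unjustified: if only $E_i$ and $E_j$ occur infinitely often in the normalized sequence, then all sufficiently long pseudopalindromic prefixes end in $E_i(u_0)$ or $E_j(u_0)$, and nothing in normalization or in the fact that $\mathbf{u}$ uses all three letters forces the third letter $E_k(u_0)$ to occur as a final letter of a pseudopalindromic prefix; so this argument does not show that all three antimorphisms must appear. And the subsequent step --- ``discarding every cyclic order other than $E_iE_kE_j$'' --- is precisely the content of the proposition, which you defer to unspecified bookkeeping. The paper's missing idea is the following mechanism: after coherence, failure of the form~\eqref{eq:tvar} means the normalized bi-sequence contains infinitely many factors $(def,E_\ell E_sE_s)$ or $(def,E_\ell E_sE_\ell)$ with $\ell\neq s$; by Corollary~\ref{cor:ecka} and normalization, the longest pseudopalindromic suffix of $w^{(n)}_{m+1}f$ is then an $E_t$-palindrome, where $t$ is the third index, whereas the closure actually performed produces an $E_s$- (resp.\ $E_\ell$-) palindrome, i.e.\ a shorter suffix was used. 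Extending the skipped $E_t$-palindrome symmetrically until it is surrounded by letters $g,h$ with $E_t(g)\neq h$, and using closure of $\mathcal{L}(\mathbf{u})$ under $E_t$, yields arbitrarily long bispecial factors (note: factors, not prefixes --- your ``prefix with two distinct left extensions'' is not what the construction gives), which contradicts periodicity. Without this suffix-comparison argument, your proof establishes neither that all three antimorphisms occur nor that the pattern must be cyclic, so the $(\Rightarrow)$ implication remains unproved.
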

\begin{proof}
$(\Rightarrow)$: Suppose that the normalized bi-sequence is not of the form~\eqref{eq:tvar}. We will show that $\mathbf u=\mathbf u\mathbf(\widetilde{\Delta},\widetilde{\Theta})$ is aperiodic. Assume:
\begin{enumerate}
\item There exists $n_0$ such that for all $n > n_0$ the following conditions are satisfied:
\begin{itemize}
\item $\vartheta_n = E_i \Rightarrow \delta_{n+1} = a$,
\item $\vartheta_n = E_j \Rightarrow \delta_{n+1} = b$,
\item alternatively $\vartheta_n = E_k \Rightarrow \delta_{n+1} = c$;
\end{itemize}
\item moreover $E_i(a) = E_j(b) (=E_k(c))$,
\end{enumerate}
where $\Theta = \vartheta_1\vartheta_2\cdots$, $\Delta = \delta_1\delta_2\cdots$, $a,b,c,i,j,k \in \{0,1,2\}$ and $i,j,k$ are mutually different indices. If at least one of the above conditions is not met, then it can be easily shown that every prefix of $\mathbf{u}$ is left special. Since condition~\eqref{eq:tvar} is not met, two possibilities occur:
\begin{itemize}
\item The bi-sequence $(\widetilde{\Delta},\widetilde{\Theta})$ contains infinitely many times the factor $(def,E_\ell E_s E_s)$ for some $d,e,f,\ell,s \in \{0,1,2\}$ and $\ell, s$ mutually distinct. Denote $w^{(n)}_{m}$, $w^{(n)}_{m+1}$ and $w^{(n)}_{m+2}$ the corresponding $\vartheta$-palindromic prefixes. Thanks to Corollary~\ref{cor:ecka} and thanks to the fact that the bi-sequence $(\widetilde{\Delta},\widetilde{\Theta})$ is normalized, the $E_t$-palindrome $E_t(f)p^{(n)}f = E_s(e)E_s(w^{(n)}_{m})f$, where $t$ is different from $\ell,s$, is the longest $\vartheta$-palindromic suffix of the word $w^{(n)}_{m+1}f$. (We use the equalities: $E_s(f)=E_\ell(e), \ f=E_s E_\ell(e), \ E_t(f)=E_tE_s E_\ell(e)=E_s(e)$.) However, since $w^{(n)}_{m+2}$ is an $E_s$-palindrome, another shorter $\vartheta$-palindromic suffix of the word $w^{(n)}_{m+1}f$ has been chosen when constructing $w^{(n)}_{m+2}$. This implies that if we extend the $E_t$-palindrome $p^{(n)}$ stepwise by one letter from both sides so that the obtained factor is again an $E_t$-palindrome, we get some $E_t$-palindrome $q^{(n)}$ surrounded by two letters $g,h$ satisfying $E_t(g) \neq h$. Since ${\mathcal L}(\mathbf u)$ is closed under $E_t$, the factor $q^{(n)}$ is bispecial and the word $\mathbf{u}$ is thus aperiodic.
\item The bi-sequence $(\widetilde{\Delta},\widetilde{\Theta})$ contains infinitely many times the factor $(def,E_\ell E_s E_\ell)$ for some $d,e,f,\ell, s \in \{0,1,2\}$ and $\ell,s$ mutually different. Analogously as in the previous case, we denote $E_t(f)p^{(n)}f = E_s(e)E_s(w^{(n)}_{m})f$ the longest $\vartheta$-palindromic suffix of the word $w^{(n)}_{m+1}f$. This time, the factor $w^{(n)}_{m+2}$ is an $E_\ell$-palindrome. Consequently, another suitable shorter $\vartheta$-palindromic suffix of the word $w^{(n)}_{m+1}f$ has been chosen when constructing $w^{(n)}_{m+2}$. We can hence find exactly as above bispecial factors, thus the word $\mathbf u$ is aperiodic.
\end{itemize}
To sum up, we get $\widetilde{\Theta}=\sigma(E_i E_j E_k)^{\omega}$ and the form of $\widetilde{\Delta}$ follows from conditions 1 and 2.

\noindent $(\Leftarrow)$: Assume without loss of generality that the normalized directive bi-sequence is of the form $(\widetilde{\Delta},\widetilde{\Theta}) = (v(102)^{\omega}, \sigma(E_0E_1E_2)^{\omega})$. Denote $|v| = |\sigma| = n_0$. Then we have $w_{n_0+1} = ps$, where $2s0$ is the longest $E_1$-palindromic suffix of the word $w_{n_0+1}0$. It follows that $w_{n_0+2} = psE_1(p)$. Thanks to the fact that the bi-sequence is normalized and thanks to Corollary~\ref{cor:ecka}, the longest $E_2$-palindromic suffix of the word $w_{n_0+2}2$ equals $E_1(0)E_1(w_{n_0+1})2 = 2E_1(ps)2 = 2sE_1(p)2$. Therefore $w_{n_0+3} = psE_1(p)E_2(p)$. Similarly, the longest $E_0$-palindromic suffix of the word $w_{n_0+3}1$ is equal to $E_2(2)E_2(w_{n_0+2})1 = 2E_2(psE_1(p))1 = 2sE_1(p)E_2(p)1$. We obtain $w_{n_0+4} = psE_1(p)E_2(p)E_0(p)$.

Repeating the previous steps, we get $\mathbf{u} = ps(E_1(p)E_2(p)E_0(p))^{\omega}$, thus $\mathbf{u}$ is periodic.
\end{proof}

\section{Open problems}

We have provided a~necessary and sufficient condition for periodicity of generalized pseudostandard words over binary and ternary alphabet. More precisely, we have described how the directive bi-sequence of a~generalized pseudostandard word has to look like in order to correspond to a~periodic word. The two cases are surprisingly different -- the ternary case is not at all a~simple generalization of the condition over binary alphabet. Over ternary alphabet, it may happen that in order to decide about periodicity using our result (Theorem~\ref{thm:ternary}), one needs to know the normalized directive bi-sequence. The problem is that we only know that the normalized form of every directive bi-sequence exists, but in contrast to binary alphabet, we have no algorithm for producing the normalized directive bi-sequence from a~given directive bi-sequence over ternary alphabet. Therefore, it is desirable to find such a~normalizing algorithm over ternary or even any alphabet. Section~\ref{sec:norm ternary} may serve as a~hint in such an effort.

Observing results for binary and ternary alphabet, we have the following conjecture for multiliteral alphabet.
\begin{conjecture}[Periodicity of generalized pseudostandard words]\label{multiliteral}
Consider an alphabet $\mathcal A$ with $\#{\mathcal A}=d$ and $G$ the set of all involutory antimorphisms on ${\mathcal A}^*$.
Let $\mathbf u(\Delta, \Theta)$ be a~$d$-ary generalized pseudostandard word, where $\Delta$ is a~sequence of letters from $\mathcal A$ and $\Theta$ is a~sequence of antimorphisms from $G$. Then $\mathbf u$ is periodic if and only if the following conditions are met:
\begin{enumerate}
\item The normalized directive bi-sequence is of the form $$(\widetilde{\Delta}, \widetilde{\Theta})=(v\delta_1\delta_2\delta_3\ldots, \sigma \vartheta_1 \vartheta_2\vartheta_3\ldots),$$ where $|v|=|\sigma|$ and $\vartheta_i(\delta_{i+1})=\vartheta_j(\delta_{j+1})$ for all $i,j \in \mathbb N$.
\item For all $i \in \mathbb N$, if $w$ is a~$\vartheta_i$-palindrome, then $\vartheta_{i+1}(w)$ is a~$\vartheta_{i+2}$-palindrome.
\end{enumerate}
\end{conjecture}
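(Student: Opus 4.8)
The statement is for an arbitrary alphabet, so I cannot lean on a normalization algorithm; the plan is to work throughout with \emph{a} normalized directive bi-sequence $(\widetilde{\Delta},\widetilde{\Theta})$, whose existence is guaranteed, to prove the two implications separately, and to recall that a generalized pseudostandard word, being uniformly recurrent, is either aperiodic or purely periodic. The key preliminary observation is that condition~2 is an algebraic identity on the antimorphisms in disguise: saying that $\vartheta_{i+1}(w)$ is a $\vartheta_{i+2}$-palindrome for every $\vartheta_i$-palindrome $w$ amounts to $\vartheta_{i+1}\vartheta_{i+2}\vartheta_{i+1}(w)=w$ on all $\vartheta_i$-palindromes, which, there being enough long $\vartheta_i$-palindromes in $\mathbf{u}$ to determine an involutory antimorphism, should force the identity $\vartheta_{i+2}=\vartheta_{i+1}\vartheta_i\vartheta_{i+1}$.

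For the implication $(\Leftarrow)$ I would first exploit this identity. It makes the map $(\vartheta_i,\vartheta_{i+1})\mapsto(\vartheta_{i+1},\vartheta_{i+2})$ a bijection of the finite set of pairs of involutory antimorphisms, since the inverse recovers $\vartheta_i=\vartheta_{i+1}\vartheta_{i+2}\vartheta_{i+1}$; hence every orbit is a cycle and $\widetilde{\Theta}$ is eventually periodic, with some period $T$. I would then run the cascade exactly as in the three-antimorphism case of the preceding proposition: writing $w_{n_0+1}=ps$ with $n_0=|v|$, an induction shows $w_{n_0+m}=ps\,\vartheta_{n_0+2}(p)\cdots\vartheta_{n_0+m}(p)$, because at each step the longest $\vartheta_{n+1}$-palindromic suffix of $w_n\delta_{n+1}$ is $z\,\vartheta_n(w_{n-1})\,\delta_{n+1}$: its core $\vartheta_n(w_{n-1})$ is a $\vartheta_{n+1}$-palindrome by condition~2, and the border letters match because condition~1 fixes the centre letter $a=\vartheta_i(\delta_{i+1})$ while the antimorphism identity gives $\vartheta_{n+1}\vartheta_n=\vartheta_n\vartheta_{n-1}$, whence $\vartheta_{n+1}(\delta_{n+1})=z=\vartheta_n(\delta_n)$; normalization guarantees no longer suffix is skipped. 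Eventual periodicity of $\widetilde{\Theta}$ then collapses the appended blocks into one repeating factor, giving $\mathbf{u}=ps\bigl(\vartheta_{n_0+2}(p)\cdots\vartheta_{n_0+T+1}(p)\bigr)^{\omega}$.

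For $(\Rightarrow)$ I would argue by contraposition, generalizing the left-special and bispecial constructions already used. If condition~1 fails, then after normalization some antimorphism is followed infinitely often by two letters with distinct images, so every prefix $v$ sits inside pseudopalindromic prefixes furnishing it two distinct left extensions; as in Theorem~\ref{thm:periodicitybinary} and Proposition~\ref{th:per} this makes every prefix left special, so $\mathbf{u}$ is aperiodic. If condition~1 holds but condition~2 fails for values of $i$ occurring infinitely often, then for those $i$ the candidate $\vartheta_n(w_{n-1})$ is \emph{not} a $\vartheta_{n+1}$-palindrome, so the closure selects a strictly shorter pseudopalindromic suffix; extending the maximal palindrome symmetrically then produces arbitrarily long bispecial factors surrounded by mismatched letters, exactly the mechanism of Proposition~\ref{more_antimorphismR}, and again forces aperiodicity.

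The hard part will be twofold. First, promoting condition~2 from a statement about individual palindromic \emph{words} to the clean identity $\vartheta_{i+2}=\vartheta_{i+1}\vartheta_i\vartheta_{i+1}$ requires that $\mathbf{u}$ contain $\vartheta_i$-palindromes rich enough to pin down an involutory antimorphism; degenerate situations, where few letters are effectively used or distinct antimorphisms coincide on the relevant factors, must be ruled out. Second, and more seriously, the whole argument is conducted relative to a normalized bi-sequence, and over a general alphabet no normalization algorithm is available; one must therefore reason abstractly that a normalized form exists, that the original sequence is a subsequence of it as in Proposition~\ref{more_antimorphismR}, and that failure of condition~1 or~2 is faithfully reflected in the normalized form. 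Establishing the bispecial construction in full generality, rather than for the four ternary antimorphisms where the explicit relations of Remark~\ref{poz:e} and Corollary~\ref{cor:ecka} are available, is where I expect the real difficulty to lie.
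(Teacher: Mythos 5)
The statement you are addressing is Conjecture~\ref{multiliteral}: the paper states it as an open problem and gives \emph{no proof at all}, only consistency checks against the binary and ternary theorems (the two corollaries of the final section) and an example showing why the word must be genuinely $d$-ary. So there is no ``paper's own proof'' to compare your argument with; the question is whether your sketch closes the conjecture, and it does not. You have correctly identified the two obstructions, but identifying them is not overcoming them. Your induction for $(\Leftarrow)$ --- the cascade $w_{n_0+m}=ps\,\vartheta_{n_0+2}(p)\cdots\vartheta_{n_0+m}(p)$ --- hinges at every step on the phrase ``normalization guarantees no longer suffix is skipped.'' Over binary and ternary alphabets this guarantee is exactly what the paper labored to establish (Theorem~\ref{thm:norm}, Lemma~\ref{lem:norm}, Corollaries~\ref{cor:norm} and~\ref{cor:norm2}, Lemma~\ref{lem:norm2}); over a general alphabet the only known fact is that a normalized bi-sequence \emph{exists}, with no structural description of which pseudopalindromic prefixes the sequence $(w_n)$ captures, nor of which factors a normalized bi-sequence can contain. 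Without an analogue of Lemma~\ref{lem:norm2} your induction step is unsupported, and without an analogue of Theorem~\ref{thm:norm} your claim in $(\Rightarrow)$ that failure of condition~1 or~2 ``is faithfully reflected in the normalized form'' is likewise unsupported.

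The second gap is in your aperiodicity mechanism when condition~2 fails: you import the bispecial construction of Proposition~\ref{more_antimorphismR}, but that construction is not purely formal. It uses closure of the language under $R$ (so that the mirrored extension of the blocking factor is again a factor), and in the three-exchange case it leans on the special ternary relations $E_iE_jE_k=E_j$ of Remark~\ref{poz:e} and Corollary~\ref{cor:ecka}. For a general set of involutory antimorphisms on $d$ letters the available closure properties are weaker, and one must actually prove that the symmetric extension of the skipped pseudopalindromic suffix yields arbitrarily long left special or bispecial factors. One smaller remark in your favour: if condition~2 is read, as stated, as quantifying over \emph{all} $\vartheta_i$-palindromes $w\in{\mathcal A}^*$ (not only factors of $\mathbf{u}$), then applying it to the length-two $\vartheta_i$-palindromes $a\vartheta_i(a)$, $a\in\mathcal A$, gives $\vartheta_{i+2}\vartheta_{i+1}(a)=\vartheta_{i+1}\vartheta_i(a)$ for every letter $a$, hence $\vartheta_{i+2}=\vartheta_{i+1}\vartheta_i\vartheta_{i+1}$ exactly; no richness argument about $\mathbf{u}$ is needed there, and your deduction that $\widetilde{\Theta}$ is eventually periodic is then sound. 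But that is the easy part; the missing normalization theory and the general bispecial construction, which you explicitly defer, are precisely why the statement remains a conjecture rather than a theorem.
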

In order to explain that this conjecture is in correspondence with results over binary and ternary alphabet, let us write down the statements for periodicity over binary and ternary alphabet using the normalized directive bi-sequence. Considering Lemma~\ref{lemma:tvar} and Remark~\ref{rem:norm} following Theorem~\ref{thm:periodicitybinary}, we have the next corollary.
\begin{corollary}
Let $(\widetilde{\Delta}, \widetilde{\Theta})$ be the normalized directive bi-sequence of a~binary generalized pseudostandard word $\mathbf u={\mathbf u}(\widetilde{\Delta}, \widetilde{\Theta})$. Then $\mathbf u$ is periodic if and only if one of the following conditions is met:
  \begin{enumerate}
\item $(\widetilde{\Delta}, \widetilde{\Theta})=(va^{\omega}, \sigma\vartheta^{\omega})$ for some $v \in \{0,1\}^*, \sigma \in \{E, R\}^*, |v|=|\sigma|, a \in \{0,1\}$.
 \item $(\widetilde{\Delta}, \widetilde{\Theta})=(v(a\overline{a})^{\omega}, \sigma(RE)^{\omega})$ for some $v \in \{0,1\}^*, \sigma \in \{E, R\}^*, |v|=|\sigma|$, $a \in \{0,1\}$.

\end{enumerate}
\end{corollary}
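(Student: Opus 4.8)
The plan is to read this corollary off the combination of Theorem~\ref{thm:periodicitybinary} and Lemma~\ref{lemma:tvar}, since it is merely a restatement of the periodicity criterion \eqref{eq:podm} in terms of the shape of the normalized bi-sequence. Because $(\widetilde\Delta,\widetilde\Theta)$ is already normalized and generates $\mathbf u$, Theorem~\ref{thm:periodicitybinary} shows that $\mathbf u$ is periodic if and only if $(\widetilde\Delta,\widetilde\Theta)$ satisfies condition~\eqref{eq:podm}. Thus the whole task reduces to checking that, for a normalized bi-sequence, condition~\eqref{eq:podm} is equivalent to being of form~1 or form~2.

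For the $(\Leftarrow)$ direction I would simply verify \eqref{eq:podm} by inspection in each case. If $(\widetilde\Delta,\widetilde\Theta)=(va^\omega,\sigma\vartheta^\omega)$ with $|v|=|\sigma|$, then for every $n>|\sigma|$ we have $\vartheta_n=\vartheta$ and $\delta_{n+1}=a$, so the equivalence $\delta_{n+1}=a\Leftrightarrow\vartheta_n=\vartheta$ holds trivially (both sides always true), giving \eqref{eq:podm} with $n_0=|\sigma|$. If $(\widetilde\Delta,\widetilde\Theta)=(v(a\bar a)^\omega,\sigma(RE)^\omega)$ with $|v|=|\sigma|$, then for $n>|\sigma|$ the antimorphisms and letters alternate in lockstep, and a one-line parity check ($\vartheta_{n_0+1}=R$ is paired with $\delta_{n_0+2}=\bar a$, $\vartheta_{n_0+2}=E$ with $\delta_{n_0+3}=a$, and so on) shows $\vartheta_n=R\Leftrightarrow\delta_{n+1}=\bar a$. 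This is exactly \eqref{eq:podm} with the letter $\bar a$ and antimorphism $R$, so both forms force periodicity.

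For the $(\Rightarrow)$ direction, assume $\mathbf u$ is periodic. By Theorem~\ref{thm:periodicitybinary} the normalized bi-sequence satisfies \eqref{eq:podm} (Remark~\ref{rem:norm} confirms that this property is preserved when passing to the normalized form, so applying Lemma~\ref{lemma:tvar} below is legitimate). Now I would split on how often $E$ and $R$ appear in $\widetilde\Theta$. If one of them occurs only finitely often, then $\widetilde\Theta$ is eventually constant, $\widetilde\Theta=\sigma\vartheta^\omega$; feeding $\vartheta_n=\vartheta$ for large $n$ into \eqref{eq:podm} forces $\delta_{n+1}$ to be eventually constant as well, so $\widetilde\Delta=va^\omega$ and we land in form~1 (padding $v$ or $\sigma$ to arrange $|v|=|\sigma|$). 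If instead both $E$ and $R$ occur infinitely often, the hypotheses of Lemma~\ref{lemma:tvar} are met and it yields $\widetilde\Delta=\nu b a^i(a\bar a)^\omega$ and $\widetilde\Theta=\sigma\vartheta^{i+1}(\overline\vartheta\vartheta)^\omega$ with $|\nu|=|\sigma|$.

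The only real work, and where I expect the bookkeeping to be fiddly, is converting this last output into the clean form~2 $(v(a\bar a)^\omega,\sigma(RE)^\omega)$. The non-alternating heads $\nu b a^i$ and $\sigma\vartheta^{i+1}$ have equal length ($|\nu|+1+i=|\sigma|+i+1$), so I would absorb them into new prefixes $v,\sigma$; the two tails $(a\bar a)^\omega$ and $(\overline\vartheta\vartheta)^\omega$ are genuinely $2$-periodic, and since $\{\vartheta,\overline\vartheta\}=\{E,R\}$ the antimorphism tail is either $(RE)^\omega$ or $(ER)^\omega$. In the latter case I would shift one further symbol on both the $\Delta$- and $\Theta$-sides simultaneously, which keeps $|v|=|\sigma|$ and makes the antimorphism tail begin with $R$; the letter tail then reads $(a'\overline{a'})^\omega$ for the letter $a'$ that \eqref{eq:podm} pairs with the new leading antimorphism, which is precisely form~2. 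The $(\Leftarrow)$ verification already showed these forms are consistent with \eqref{eq:podm}, so no contradiction arises, and the equivalence is complete.
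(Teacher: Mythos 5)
Your proposal is correct and matches the paper's intended derivation exactly: the paper gives no separate proof but states that the corollary follows by "considering Lemma~\ref{lemma:tvar} and Remark~\ref{rem:norm} following Theorem~\ref{thm:periodicitybinary}," which is precisely your route (apply the periodicity criterion~\eqref{eq:podm} to the normalized bi-sequence, split on whether one antimorphism occurs finitely often, and in the infinite case massage the output of Lemma~\ref{lemma:tvar} into the alternating form). Your bookkeeping for absorbing the heads $\nu b a^i$, $\sigma\vartheta^{i+1}$ and shifting one symbol to make the tail start with $R$ is also sound.
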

Using Theorem~\ref{thm:ternary}, Lemma~\ref{lem:norm} and Corollary~\ref{cor:norm2}, we get the following corollary.
\begin{corollary}
Let $(\widetilde{\Delta}, \widetilde{\Theta})$ be the normalized directive bi-sequence of a~ternary generalized pseudostandard word $\mathbf u={\mathbf u}(\widetilde{\Delta}, \widetilde{\Theta})$. Then $\mathbf u$ is periodic if and only if one of the following conditions is met:
\begin{enumerate}
\item $(\widetilde{\Delta}, \widetilde{\Theta})=(va^{\omega}, \sigma\vartheta^{\omega})$ for some $v \in \{0,1,2\}^*$, $\sigma \in \{E_0, E_1, E_2, R\}^*$, $|v|=|\sigma|$, $\vartheta \in \{E_0, E_1, E_2, R\}$ and $a \in \{0,1,2\}$.
 \item $(\widetilde{\Delta}, \widetilde{\Theta})=(v(ab)^{\omega}, \sigma(RE_i)^{\omega})$ for some $v \in \{0,1,2\}^*$, $\sigma \in \{E_0, E_1, E_2, R\}^*$, $|v|=|\sigma|$, $i \in \{0, 1, 2\}$ and $a, b \in \{0,1, 2\}$.
\item $(\widetilde{\Delta}, \widetilde{\Theta})=(v(ijk)^{\omega}, \sigma(E_k E_j E_i)^{\omega})$, where $v \in \{0,1,2\}^*,  \sigma \in \{E_0, E_1, E_2, R\}^*$, $|v|=|\sigma|$ and $i,j,k \in \{0,1,2\}$ are mutually different letters.
\end{enumerate}
\end{corollary}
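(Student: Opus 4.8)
The plan is to treat this corollary as nothing more than Theorem~\ref{thm:ternary} rewritten in terms of the \emph{normalized} bi-sequence. The point I would exploit at the outset is that $\mathbf u={\mathbf u}(\widetilde{\Delta},\widetilde{\Theta})$, so periodicity of $\mathbf u$ is equivalent to periodicity of the word produced by $(\widetilde{\Delta},\widetilde{\Theta})$; it therefore suffices to show that the three conditions of Theorem~\ref{thm:ternary} correspond, one for one, to the three displayed forms of $(\widetilde{\Delta},\widetilde{\Theta})$.

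For the implication ($\Leftarrow$), I would check that each normalized form forces one of the conditions of Theorem~\ref{thm:ternary}. Form~1 is verbatim condition~1 of the theorem and form~3 is verbatim condition~3, so both give periodicity immediately. Form~2, namely $(\widetilde{\Delta},\widetilde{\Theta})=(v(ab)^{\omega},\sigma(RE_i)^{\omega})$, places us under the hypotheses of Proposition~\ref{th:per}: $R$ and $E_i$ both occur infinitely often, and the strict alternation of $R,E_i$ against the two-periodic letter pattern $(ab)^{\omega}$ yields exactly implication~\eqref{eq:podminka2} (or~\eqref{eq:podminka2b}) with $\vartheta=E_i$. The one fact still needed is $E_i(a)=b$; I would obtain it as inherited from condition~2 of Theorem~\ref{thm:ternary} (where the defining relation $\vartheta(a)=b$ becomes $E_i(a)=b$), noting that a form~2 bi-sequence with $E_i(a)\neq b$ cannot be normalized, since by Proposition~\ref{th:per} it generates an aperiodic word every prefix of which is left special. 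Hence condition~2 holds and $\mathbf u$ is periodic.

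For ($\Rightarrow$), I would assume $\mathbf u$ periodic, invoke Theorem~\ref{thm:ternary}, and convert each of its cases into a normalized form. Case~3 is already phrased for the normalized bi-sequence, giving form~3. In case~2 the hypotheses of Corollary~\ref{cor:norm2} are met, producing a bi-sequence $(v(ab)^{\omega},\sigma(R\vartheta)^{\omega})$ that generates $\mathbf u$; Lemma~\ref{lem:norm2} then certifies that from index $|v|$ on this bi-sequence omits no palindromic or $\vartheta$-palindromic prefix, and since $R,\vartheta$ are the only antimorphisms occurring infinitely often the language is closed under no others, so the bi-sequence is normalized and we obtain form~2 with $E_i=\vartheta$. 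In case~1 only one antimorphism $\vartheta$ recurs and $\Delta$ is eventually constant; here I would argue that normalization cannot introduce a second antimorphism infinitely often, so the normalized bi-sequence remains eventually constant, yielding form~1.

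The step I expect to be the main obstacle is this last one, together with the normality claim in case~2: in both I must show not merely that \emph{some} bi-sequence generates $\mathbf u$, but that the claimed shape is genuinely the normalized one, i.e.\ that beyond a finite prefix no pseudopalindromic prefix is missed. In case~2 this is delivered cleanly by Lemma~\ref{lem:norm2}. In case~1 I would lean on the explicit description $\mathbf u=ps(\vartheta(p))^{\omega}$ from Lemma~\ref{1antimorphism}: the pseudopalindromic prefixes of such a purely periodic word eventually recur with a single governing antimorphism and letter, so after finitely many corrections the normalized directive bi-sequence stabilizes to a $\sigma\vartheta^{\omega}$-tail, which is exactly form~1.
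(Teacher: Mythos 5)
Your overall plan---reading the corollary as Theorem~\ref{thm:ternary} transcribed onto the normalized bi-sequence and matching the three displayed forms to the theorem's three conditions---is exactly the paper's intent (its proof is essentially the citation of Theorem~\ref{thm:ternary}, Lemma~\ref{lem:norm} and Corollary~\ref{cor:norm2}), and your handling of forms~1 and~3 is fine. The genuine gap is in form~2. For ($\Leftarrow$) you correctly observe that you need $E_i(a)=b$, but your justification---that a form-2 bi-sequence with $E_i(a)\neq b$ ``cannot be normalized, since by Proposition~\ref{th:per} it generates an aperiodic word every prefix of which is left special''---is a non sequitur. Normalization is a property of whether the sequence $(w_n)$ catches every pseudopalindromic prefix of the word; it has nothing to do with periodicity, and the paper itself asserts (Section~\ref{sec:norm ternary}, and again in the proof of Proposition~\ref{more_antimorphismR}) that \emph{every} directive bi-sequence, including those of aperiodic words, admits a normalized form. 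So aperiodicity can never certify non-normalization. What actually fails in the mismatched case must be proved by exhibiting a skipped pseudopalindromic prefix, typically with respect to an antimorphism \emph{outside} $\{R,E_i\}$: for the paper's own aperiodic example $((01)^{\omega},(RE_1)^{\omega})$ the prefixes are $w_1=0$, $w_2=012$, $w_3=0120210,\ldots$, and the prefix $01$, an $E_2$-palindrome, is skipped (in general $ab$ is an $E_j$-palindrome for the exchange $E_j$ swapping $a$ and $b$, and $aa$ is a skipped $R$-palindrome when $a=b$). Turning this into an argument valid beyond the initial segment $(v,\sigma)$ requires a construction in the style of Lemma~\ref{lem:norm}; you supply none, so the ($\Leftarrow$) direction for form~2 is unproven. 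It is also precisely here that one needs ``normalized'' over ternary to refer to all four antimorphisms, not merely those occurring in $\widetilde\Theta$---a point both of your arguments ignore.

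The same oversight undermines your ($\Rightarrow$) argument in case~2. Lemma~\ref{lem:norm2} only guarantees that $(w_n)$ contains the palindromic prefixes followed by $b$ and the $\vartheta$-palindromic prefixes followed by $a$; your supplementary claim that nothing else can occur because ``the language is closed under no other antimorphisms'' is false, since a single prefix can be an $E_j$-palindrome without the language being closed under $E_j$ (again, $01$ above). You would moreover need uniqueness of the normalized bi-sequence to identify the output of Corollary~\ref{cor:norm2} with the given $(\widetilde\Delta,\widetilde\Theta)$. Both problems vanish if you argue directly on $(\widetilde\Delta,\widetilde\Theta)$, which is what the paper's citation of Lemma~\ref{lem:norm} is for: if condition~2 of Theorem~\ref{thm:ternary} holds but the tail of $\widetilde\Theta$ does not strictly alternate, then far enough out there is a factor $(\,\cdot\,,\vartheta RR)$ or $(\,\cdot\,,R\vartheta\vartheta)$ whose letters satisfy the hypothesis $\vartheta(\delta_{n+1})=\delta_{n+2}$ of Lemma~\ref{lem:norm} (this follows from the alternation implications together with $\vartheta(a)=b$), and that lemma produces a skipped pseudopalindromic prefix, contradicting normalization; hence the tail alternates and form~2 follows. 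Finally, your case~1 needs no argument about normalization ``not introducing a second antimorphism'': the theorem is being applied to the normalized bi-sequence itself, so its condition~1 \emph{is} form~1.
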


It is necessary to assume in Conjecture~\ref{multiliteral} that the word $\mathbf u(\Delta, \Theta)$ is $d$-ary if we consider the set $G$ of involutory antimorphisms over a~$d$-ary alphabet as illustrated by the following example.
\begin{example}
Consider ${\mathcal A}=\{0,1,2,3,4\}$ and the following involutory antimorphisms:
\begin{itemize}
\item $E_{014}(0)=0, E_{014}(1)=1, E_{014}(2)=3, E_{014}(3)=2, E_{014}(4)=4,$
\item $E_2(0)=1, E_2(1)=0, E_2(2)=2, E_2(3)=4, E_2(4)=3,$
\item $R(0)=0, R(1)=1, R(2)=2, R(3)=3, R(4)=4.$
\end{itemize}
Then ${\mathbf u}((01)^{\omega}, (E_{014}E_2)^{\omega})={\mathbf u}((01)^{\omega}, (RE_2)^{\omega})=(01)^{\omega}$. Hence, we can see that as soon as we do not work with a~$d$-ary word ($d=5$ in this case), it can happen that the same word is obtained using several normalized directive bi-sequences and the conditions from Conjecture~\ref{multiliteral} are not met despite the evident periodicity of the word $\mathbf u$.
\end{example}
\subsubsection*{Acknowledgements}
This work was supported by the Czech Science Foundation grant GA\v CR 13-03538S and
by the grant of the Grant Agency of the Czech Technical University in Prague SGS14/205/OHK4/3T/14.

\end{document}